\newcommand{\cpv}{-\!\!\!\!\!\!\!\!\;\int}
\newtheorem{theorem}{Theorem}[section]
\newtheorem{lemma}[theorem]{Lemma}
\newtheorem{definition}[theorem]{Definition}
\newtheorem{example}[theorem]{Example}
\newtheorem{remark}[theorem]{Remark}
\begin{document}

\author{Steven B. Damelin}
\address{Department of Mathematics, The University of Michigan, 2074 East Hall, 530 Church Street, Ann Arbor, MI, 48109, USA}
\email{damelin@umich.edu}

\author{Kai Diethelm} 
\address{Fakultät Angewandte Natur- und Geisteswissenschaften, Hochschule Würzburg-Schweinfurt, Ignaz-Schön-Str.\ 11,
97421 Schweinfurt, Germany}
\email{kai.diethelm@fhws.de}

%\author{Hee Sun Jung}
%\address{Department of Mathematics Education, Sungkyunkwan University, %Seoul 110-745, Republic of Korea}
%\email{hsun90@skku.edu}

\title[Weighted Singular Cauchy Integrals with Exponential Weights on $\mathbb R$]{An Analytic and Numerical Analysis of Weighted Singular Cauchy Integrals with Exponential Weights on $\mathbb R$}
\date{\today}

\begin{abstract}

This paper concerns an analytic and numerical analysis of a class of weighted singular Cauchy integrals with exponential weights $w:=\exp(-Q)$ with finite moments and with smooth external fields $Q:\mathbb R\to [0,\infty)$, with varying smooth convex rate of  increase for large argument. Our analysis relies in part on weighted  polynomial interpolation at the zeros of orthonormal  polynomials
with respect to $w^2$. We also study bounds for the first derivatives of a class of functions of the second kind for $w^2$.
\end{abstract}

\maketitle

{\bf Keywords}: Cauchy principal value integral; Exponential weight;   Erd\H{o}s weight, Freud weight, Numerical approximation, Orthogonal polynomial, Quadrature,  Singular integral, Weighted approximation.

% \tableofcontents

\setcounter{equation}{0}
\section{Introduction}

Let $Q:\mathbb R\to [0,\infty)$ belong to a class of continuously differentiable functions  with varying smooth convex rate of increase for large argument. For a class of exponential weight functions $w := \exp(-Q)$ with finite moments, 
we investigate the weighted Cauchy principal value integral
\begin{equation*}
H_{w^2}[f;x] := \cpv_{\mathbb R} w^2(t) \frac{f(t)}{t-x} dt
          = \! \lim_{\varepsilon \to 0+} \!
            \left(
              \int_{-\infty}^{x-\varepsilon} \!w^2(t) \frac{f(t)}{t-x} dt + \! \int_{x+\varepsilon}^{\infty} \! w^2(t) \frac{f(t)}{t-x} dt
            \right)
\end{equation*}
with respect to its analytical properties, and we develop and analyze numerical methods for the approximate calculation of such integrals.

Here, we work on the real line, i.e.\ $x \in \mathbb R$ is arbitrary but fixed and $f:\mathbb R\to \mathbb R$ belongs to a class of functions for which in particular, $H_{w^2}[f;x]$ is finite. When we say $w$ has finite moments, we mean that $\int_{\mathbb R}x^nw^2(x)<\infty$, ($n=0,1,2,...$). 
Notice that the numerator in the integrand of the operator
$H_{w^2}[f,x]$ is $f w^2$.%

One reason for our  investigation  is due to the fact that integral equations with weighted Cauchy principal value integral kernels have shown to be an important tool for the modelling of many
physical situations.  See for example \cite{CDLM1, DD1, DD2, DD3, DD4, D, D1, K, M} and the references cited therein.
In the case of ordinary integrals (without strong singularities) on unbounded intervals, various interesting results can be found for example in  \cite{LM,SS}. 
%\newline

In a series of papers, \cite{DD1,DD2,DD3,DD4}, the authors studied this problem and some of its applications for a class of weights $w:=\exp(-Q)$ with finite moments and with even external fields $Q:\mathbb R\to [0,\infty)$ belonging to a class of continuously differentiable functions  with  smooth polynomial rate of increase for large argument. An example of such an external field $Q$ studied is $|x|^{\alpha}$ where $\alpha>1$.
%\newline

In this paper, we extend the results of \cite{DD1} to a class of exponential weight functions $w=\exp(-Q)$ with finite moments and with external fields $Q:\mathbb R\to [0,\infty)$  continuously differentiable with certain convex increase for large argument. In particular, this class of external fields $Q$ studied 
need not be even (a considerably weaker condition on $Q$) and may allow for considerable varying convex rates of increase for large argument for example smooth polynomial increase and also faster than smooth polynomial increase. Typical examples \cite{LL,Lub} of admissible external fields $Q$ would be with some $\beta \ge \alpha > 1$ and $\ell,k\geq 0$, 
\begin{equation*}
Q_{1,\alpha,\beta}(x):=
        \begin{cases}
	  x^{\alpha}, & x \in [0,\infty) \\
	  |x|^{\beta}, & x \in (-\infty,0)
        \end{cases} 
\end{equation*}
or
\begin{equation*}
Q_{2,\alpha,\beta,\ell,k}(x):=
        \begin{cases}
	  \exp_l(x^{\alpha})-\exp_l(0), & x \in [0,\infty) \\
	  \exp_k(|x|^{\beta})-\exp_k(0), & x \in (-\infty,0)
        \end{cases}
\end{equation*}

Here, for $x\in \mathbb R$, $\exp_{0}(x):=x$ and for $j\geq 1$, $\exp_j(x):=\exp(\exp(\exp...\exp(x)))$, $j$ times is the $j$th iterated exponential. In particular, $\exp_j(x)=\exp(\exp_{j-1}(x))$.\footnote{$\exp(-Q_{1,\alpha,\beta})$ and $\exp(-Q_{2,\alpha,\beta, \ell,k})$ are historically  often called respectively Freud and Erd\H{o}s weights. See \cite{LL,Lub} and the many references cited therein.}

\subsection{A note on notation and constants}

Throughout, $|.|$ is the Euclidean metric on $\mathbb R$. $\mathcal{P}_n$ will denote the class of polynomials of degree at most $n\geq 1$.  $\| \cdot \|_{L_{p}}$, $0< p\leq \infty$, will denote the usual $L_p$ function space norm. Sometimes we will write the shorthand form $\| \cdot \|_p$ when the context is clear.
For $\gamma > 0$, we identify the space $\mathrm{Lip}(\gamma)$ as the space of functions $f:\mathbb R\to \mathbb R$ for which $f$ is Lipschitz of order $\gamma$.
$C, C_1, C_2,...$ will denote positive constants independent of $n,x$ and may take on different values at different times.
The context will be clear. Function and operator notation (for example $f,H$) may also denote different or the same function/operator at different times. The context will be clear. When we write for a function, $f(\cdot)$, constant $C(\cdot)$ or operator $H(\cdot)$, we mean that the function/constant/operator depends on the indicated quantity. Dependence on several quantities follows a similar convention.
Finally, for non zero real sequences $\alpha_n$ and $\beta_n$, we write $\alpha_n=O(\beta_n)$ if there exists a constant $C>0$ so that uniformly in all other parameters that $\alpha_n$ and $\beta_n$ may depend on, $\frac{\alpha_n}{\beta_n}\leq C$, $\alpha_n=o(\beta_n)$ if $\frac{\alpha_n}{\beta_n}\to 0$, $n\to \infty$
and $\alpha_n\sim\beta_n$ if $\alpha_n=O(\beta_n)$ and $\beta_n=O(\alpha_n)$. Similar notation holds for sequences of functions or operators.

\setcounter{equation}{0}
\section{The class of weights, $a_t$, $a_{-t}$ and some further important quantities}

In this section, we introduce our class of weights and introduce some important quantities needed to move forward, including critical functions denoted by $a_t$ and $a_{-t}$ which we use throughout. 

\subsection{The class of admissible weights}

Motivated by the external fields $Q_{1,\alpha,\beta}$ and $Q_{2,\alpha,\beta,\ell,k}$ defined above, we define our class of weights \cite{LL, Lub}.  
To formulate our definition, we shall say that a function $f : \mathbb R\rightarrow [0,\infty)$
is quasi-increasing on $[0,\infty)$ if there exists $C > 0$ such that $f(x) \le C f(y)$ for all $x, y \in \mathbb R$ with $0 < x\leq  y<\infty$.
The concept quasi-decreasing is defined similarly.

Following is now our class of weights:

\begin{definition}\label{Definition 1.1}
  Let $Q: \mathbb R \rightarrow [0,\infty)$ satisfy the following properties:
  \begin{enumerate}
  \item[(a)] $Q'(x)$ exists and is continuous in $\mathbb R$, with $Q(0)=0$. Moreover, $Q''$ exists in $\mathbb R$.
  \item[(b)] $Q'(x)$ is non-decreasing in $\mathbb R$.
  \item[(c)]
    \[
      \lim_{x\rightarrow \infty} Q(x) = \lim_{x\to -\infty} Q(x) = \infty.
    \]
  \item[(d)] The function
    \begin{equation*}
      T(x) := \frac{xQ'(x)}{Q(x)}, \quad x\neq 0
    \end{equation*}
    is quasi-increasing in $(0,\infty)$, is quasi-decreasing in $(-\infty,0)$ and satisfies
    \begin{equation*}
      T(x) \ge C > 1, \quad  x \in \mathbb R\setminus \{ 0 \}.
    \end{equation*}
\item[(e)]  For all $x \in \mathbb R$, 
\[
Q''(x) Q(x) \le C_1 (Q'(x))^2
\]
 and there exists a compact subinterval $I$ of $\mathbb R$ and $C_2>0$ so that for a.e\, \, $x \in I\setminus \{ 0 \}$.
\[
Q''(x) Q(x) \geq C_2 (Q'(x))^2.
\]
 \item[(f)] There exists $\varepsilon_0\in(0,1)$ such that for $y\in \mathbb R\backslash\{0\}$,
    \begin{equation*}
      T(y) \sim T \left( y \left| 1 - \frac{\varepsilon_0}{T(y)} \right| \right).
    \end{equation*}
  \item[(g)] Assume that there exist $C$, $\varepsilon_1 > 0$ such that
    \begin{equation*}
      \int_{x-\frac{\varepsilon_1 |x|}{T(x)}}^x \frac{|Q'(s)-Q'(x)|}{|s-x|^{3/2}} ds
      \le C |Q'(x)| \sqrt{\frac{T(x)}{|x|}},  \quad x \in \mathbb R \setminus \{ 0 \}.
    \end{equation*}
  \end{enumerate}
  Then we say that $w=\exp(-Q)$ is an admissible weight with external field $Q$. 
\end{definition}

Let us illustrate Definition \ref{Definition 1.1}  using the examples of $Q_{1,\alpha,\beta}$ and $Q_{2,\alpha,\beta,\ell,k}$ in Section 1. 

\subsubsection{The Freud-type weight $Q_{1,\alpha,\beta}$}

Here, a straightforward calculation shows that $T\sim 1$ in $\mathbb R$. Thus (d) holds. Notice that $T=O(1)$ forces $Q_{1,\alpha,\beta}$ to be of smooth polynomial growth for large argument.  Indeed it is straightforward to show that $T(x)=\alpha$ for $x\in (0,\infty)$ and $T(x)=\beta$ for $x\in (-\infty,0)$. The conditions (a,b,c,e,f,g) are  straightforward to check.

\subsubsection{The Erd\H os-type weight $Q_{2,\alpha,\beta,\ell,k}$}

Here, a straightforward calculation shows that $T$ grows without bound for large argument. Thus (d) holds.  Notice that $T$ growing without bound for large argument forces $Q_{2, \alpha,\beta,k,l}$ to be of faster than smooth polynomial growth for large argument.
Indeed, it is straightforward to check that  if $l\geq 1$ and $x>0$, 
\[
T(x)=\alpha x^{\alpha}\left[\prod_{j=1}^{l-1}\exp_j(x^{\alpha})\right]\frac{\exp_l(x^{\alpha})}{\exp_l(x^{\alpha})-\exp_l(0)}.
\]
Indeed, $T(x)\to \alpha,\, x\to 0+$ while
\[
T(x)=\alpha x^{\alpha}\left[\prod_{j=1}^{l-1}\exp_j(x^{\alpha})\right](1+o(1)), \quad x\to \infty
\]
 with a similar expression holding for $x<0$. The conditions (a,b,c,e,f,g) are  straightforward to check.

\subsection{The quantities $a_t$ and $a_{-t}$}.

\begin{definition}[cf.\ \cite{Deift,LL,Lub,Mh}]
  Given an admissible weight $w$ and some $t > 0$, we define the quantities $a_t > 0$ and $a_{-t} < 0$
  as the unique solutions to the equations
  \begin{subequations} \label{1.1}
  \begin{eqnarray} 
    t & = & \frac{1}{\pi} \int_{a_{-t}}^{a_t} \frac{x Q'(x)}{\sqrt{(x-a_{-t})(a_t-x)}} dx, \\ 
    0 & = & \frac{1}{\pi} \int_{a_{-t}}^{a_t} \frac{Q'(x)}{\sqrt{(x-a_{-t})(a_t-x)}} dx.
  \end{eqnarray}
  \end{subequations}
\end{definition}

\begin{remark}
 In the special case where $Q$ is even, the uniqueness of $a_{\pm t}$ forces
 $a_{-t} = -a_t$ for all $t > 0$. In this case, $a_t$ is the unique positive solution of the equation
  \[
     t = \frac 2 \pi \int_0^1 \frac{a_t u Q'(a_t u)}{\sqrt{1-u^2}} du.
  \]
\end{remark}

Following \cite{LL}, we further use the notations
\[
  \Delta_t = [a_{-t}, a_t]
\]
and
\[
  \beta_t := \frac 1 2 (a_t + a_{-t}); \quad \delta_t := \frac 1 2 (a_t + |a_{-t}|)
\]
and define
\begin{equation*}
  \varphi_n(x) :=
  \begin{cases}
    \displaystyle
    \frac{|x-a_{-2n}| \cdot |a_{2n}-x|}{\sqrt{(|x-a_{-n}| + |a_{-n}| \eta_{-n})(|x-a_{n}| + |a_{n}| \eta_{n})}},
                       & x \in \Delta_n,\\
    \varphi_n(a_n), & x > a_n,\\
    \varphi_n(a_{-n}), & x < a_{-n}
  \end{cases}
\end{equation*}
where
\begin{equation}\label{eta}
  \eta_{\pm n} := \left( n T(a_{\pm n}) \sqrt{\frac{|a_{\pm n}|}{\delta_n}} \right)^{-2/3}.
\end{equation}

\begin{example}
\begin{itemize}
\item[(a)] Consider the weight $Q_{1,\alpha,\beta}$: Here we have for $n \to \infty$,
\[
T(a_{\pm n})\sim 1
\]  
and
\[
      a_n \sim n^{1/\alpha}, \quad |a_{-n}| \sim n^{(2\alpha-1)/(\alpha(2\beta-1))}.
    \]
\item[(b)] 
Consider the weight $Q_{2,\alpha,\beta,\ell,k}$: Here we have for $n \to \infty$
  \[
      T(a_n)\sim \prod_{j=1}^{\ell}\log_jn,
      \quad
      T(a_{-n})\sim \prod_{j=1}^{k}\log_jn
    \]
    and
    \[
      a_n    = (\log_{\ell}n)^{1/\alpha} (1+o(1)); \quad
      a_{-n} = (\log_{k}n)^{1/\beta} (1 + o(1))
    \]
    where we recall 
    \[
      \exp_{j}(x) = \underbrace{\exp(\exp(\exp\ldots\exp(x)))}_{j \,\,\textrm{times}}
    \]
    is the $j$th iterated exponential and for $x > 0$, $\log_0(x)=x$ and for $x>\exp_{j-1}(0)$ and $j\geq 1$,
    \[ 
      \log_j(x) = \underbrace{\log(\log(\log\ldots\log(x)))}_{j \,\,\textrm{times}},\,  
    \]
    is the $j$th iterated logarithm (and not the logarithm with respect to base $j$).
  \end{itemize}
\end{example}

The precise interpretations of the functions $a_{t}$ and $a_{-t}$, $t>0$ arise from logarithmic potential theory where they are in fact scaled endpoints of the support of a minimizer for a certain weighted variational problem in the complex plane.
(Hence the term external field for $Q$). When $Q$ is convex, the support of the minimizer is one interval and when $Q$ is in addition even,  $a_{-t}=-a_t$ holds for every $t>0$,
cf.\ \cite{Deift,LL,Lub,Mh}.  We shall use the important identity (and its $L_p$ cousins for different $0<p<\infty$)  
\[
	\|P_n w\|_{L_{\infty}(\mathbb R)} = \|P_n w\|_{L_{\infty}[a_{-n},a_n]}
\] 
valid for every polynomial $P\in \mathcal{P}_n$, $n \geq 1$. In the case when $w$ is even, $a_n$, $n\geq 1$, is asymptotically the smallest number for which this identity holds \cite{SD,LL,Lub,Mh}. 
This identity is useful in the sense that it can be used to get an intuitive idea of the growth of $a_{n}$ and $a_{-n}$ for large $n$ for different admissible weights $w$, for example for the admissible weights $w_{1,\alpha,\beta}$ and $w_{2,\alpha,\beta,\ell,k}$.

\setcounter{equation}{0}
\section{Main Result and Important Quantities}

In this section, we will state one of our main results, Theorem \ref{Theorem 2.3}.  
Moreover, we will also introduce and define various important quantities.
In the remaining sections, we will provide the proof of Theorem \ref{Theorem 2.3} 
and state and prove several more main results which are consequences of our machinery. 

Let $w$ be admissible. Then we can construct orthonormal polynomials $p_n(x)=p_n(w^2,x)$ of degree $n = 0, 1, 2, \ldots$ for $w^2(x)$ satisfying 
\begin{equation}
	\label{eq:onp}
   \int_{\mathbb R} p_n(x) p_m(x) w^2(x) dx = \delta_{mn}.
\end{equation}

Here for $m,n\geq 0$, $\delta_{mn}$ takes the value $1$ when $m=n$ and $0$ otherwise.

The zeros $x_{j,n}$, $j = 1, 2, \ldots, n$, 
of $p_n$ above will serve as the nodes of certain interpolatory quadrature formulas $Q_n$ which are therefore defined by
\begin{equation}\label{w_jn}
  Q_n[f;x] = \sum_{j=1}^n \zeta_{jn}(x) f(x_{j,n})
\end{equation}
and where the weights $\zeta_{jn}(.)$ are chosen such that the quadrature error $R_n$ satisfies
\[
  R_n[f;x] := H_{w^2}[f;x] - Q_n[f;x] = 0
\]
for every $x\in \mathbb R$ and every $f \in \mathcal{P}_{n-1}$.
In other words,
\begin{equation}
	\label{eq:qf-hl}
  Q_n[f;x] = H_{w^2}[L_n[f];x]
\end{equation}
where $L_n[f]$ is the Lagrange interpolation polynomial for the function $f$ with nodes $x_{j,n}$.
Let
\[
  E_n\left[f\right]_{w,\infty} := \inf_{P\in \mathcal{P}_n} \left \| \left( f - P \right) w \right \|_{L_{\infty}(\mathbb R)}
\]
be the error of best weighted polynomial approximation of a given $f$. 

We shall prove the following error bound for this numerical approximation scheme for the singular integral:

\begin{theorem}\label{Theorem 2.3}
  Let $w$ be admissible and let $\xi \in (0,1)$ be fixed.
  Consider a sequence $\mu_n:=(\mu_n)_{n=1}^{\infty}$ which converges to $0$ as $n \to \infty$ and satisfies
  $0 < \mu_n \le \min \{a_n \eta_n, |a_{-n}|\eta_{-n} \}$ for each $n\geq 1$. 
  Let $x \in \mathbb R$ and let $f\in \mathrm{Lip}(\gamma, w)$ for some $\gamma >0$. Then uniformly for $n$ large enough,
  \begin{equation*}
    |R_n[f;x]|
    \le C \left[ (1+  n^{-2}  w^{-1}(x)) \log n + \gamma_n(x) \right] E_{n-1}[f]_{w,\infty},
  \end{equation*}
  where
  \begin{eqnarray*}
    \gamma_n(x) & := & \mu_n^{-1} \delta_n^{5/4} \max \left \{ \frac{T(a_n)}{a_n}, \frac{T(a_{-n})}{|a_{-n}|} \right\}^{1/4} \log n\\
    && {} + \mu_n \times
    \begin{cases}
      A_n, & a_n ( 1 + C \eta_n) \le x \le 2a_n, \\
      B_n,  & a_{\xi n} \le x \le a_n ( 1 + C \eta_n), \\
      C_n,  &  a_{-\xi n} \le x \le a_{\xi n}, \\
      D_n,  & a_{-n} ( 1 + C \eta_{-n}) \le x \le a_{-\xi n}, \\
      E_n, & 2a_{-n} \le x \le a_{-n} ( 1 + C \eta_{-n} ), \\
      0,  & \textrm{otherwise,}
    \end{cases}
  \end{eqnarray*}
  and
  \begin{eqnarray*}
    A_n & := & n^{5/6} \delta_n^{1/3} a_n^{-5/6} T^{5/6}(a_n)
               \max \left \{ \frac{T(a_n)}{a_n}, \frac{T(a_{-n})}{|a_{-n}|} \right \}^{1/2},\\
    B_n & := & n \delta_n^{1/4} a_n^{-3/4} T^{3/4}(a_n)
               \max \left \{ \frac{T(a_n)}{a_n}, \frac{T(a_{-n})}{|a_{-n}|} \right \}^{1/2},\\
    C_n & := & n^{7/6} \delta_n^{-1/3}
               \max \left \{ \frac{T(a_n)}{a_n}, \frac{T(a_{-n})}{|a_{-n}|} \right \}^{2/3} \log n,\\
    D_n & := & n \delta_n^{1/4} |a_{-n}|^{-3/4} T^{\frac34}(a_{-n})
               \max \left \{ \frac{T(a_n)}{a_n}, \frac{T(a_{-n})}{|a_{-n}|} \right \}^{1/2},\\
    E_n & := & n^{\frac56} \delta_n^{1/3} |a_{-n}|^{-5/6} T^{\frac56}(a_{-n})
               \max \left \{ \frac{T(a_n)}{a_n}, \frac{T(a_{-n})}{|a_{-n}|} \right \}^{1/2}.
  \end{eqnarray*}
\end{theorem}

\subsection{The sequence of functions $\gamma_n(\cdot)$}

In this section, we look at the sequence of functions $\gamma_n(.)$ for the two examples $Q_{1,\alpha,\beta}$ and $Q_{2,\alpha,\beta,\ell,k}$ defined in Section 1, so that the reader may absorb Theorem \ref{Theorem 2.3}. We recall the definitions of $Q_{1,\alpha,\beta}$ and $Q_{2,\alpha,\beta}$ and some information re these.

Let $\beta \ge \alpha > 1$ and $\ell,k\geq 0$. Then:
\begin{equation*}
Q_{1,\alpha,\beta}(x):=
        \begin{cases}
	  x^{\alpha}, & x \in [0,\infty) \\
	  |x|^{\beta}, & x \in (-\infty,0).
        \end{cases} 
\end{equation*}

and 

\begin{equation*}
Q_{2,\alpha,\beta,\ell,k}(x):=
        \begin{cases}
	  \exp_l(x^{\alpha})-\exp_l(0), & x \in [0,\infty) \\
	  \exp_k(|x|^{\beta})-\exp_k(0), & x \in (-\infty,0).
        \end{cases}
\end{equation*}

Then straightforward calculations yield the following properties uniformly for $n$ large enough.
\medskip

\begin{itemize} 
\item $Q_{1,\alpha,\beta}:$
\begin{itemize}
\item[(a)] $T(a_n)=\alpha$.
\item[(b)] $T(a_{-n})=\beta$.
\item[(c)] $a_n\sim n^{\frac{1}{\alpha}}.$
\item[(d)] $|a_{-n}|\sim n^{\frac{1}{\alpha}.
\left(\frac{2\alpha-1}{2\beta-1}\right)}$.
\item[(e)] $\delta_n\sim a_n\sim n^{\alpha}$
\item[(f)] $\eta_n\sim n^{-2/3}$.
\item[(g)] $\eta_{-n}\sim \left[n^{1-\frac{1}{\alpha}.
\left(\frac{\beta-\alpha}{2\beta-1}\right)}\right]^{-2/3}$.
\end{itemize}
\end{itemize}

\begin{itemize} 
\item $Q_{2,\alpha,\beta,k,\ell}:$
\begin{itemize}
\item[(a)] $T(a_n)\sim \prod_{j=1}^{\ell}\log_{j}(n)$.
\item[(b)] $T(a_{-n})\sim \prod_{j=1}^{k}\log_{j}(n)$.
\item[(c)] $a_n = (\log_{\ell}(n))^{1/\alpha} (1+o(1))$.
\item[(d)] $a_{-n} = -(\log_{k}(n)^{1/\beta} (1+o(1)).$
\item[(e)] $\delta_n\sim a_n\sim (\log_{\ell}(n))^{1/\alpha} (1+o(1))$
\item[(f)] $\eta_n\sim \left(n\prod_{j=1}^{\ell}\log_{j}(n)
\right)^{-2/3}$.
\item[(g)] $\eta_{-n}\sim \left(n\prod_{j=1}^{k}\log_j(n)\left[\frac{\left(\log_k(n)\right)^{1/\beta}}{\left(\log_l(n)\right)^{1/\alpha}}\right]^{1/2}
\right)^{-2/3}.$
\end{itemize}
\end{itemize}

\begin{remark}
In the case when $Q$ is even and $T\sim 1$, that is $Q$ is of smooth polynomial growth with large argument (for example $Q_{1,\alpha,\beta}$), Theorem \ref{Theorem 2.3} is essentially Theorem 1.3 of \cite{DD1}.
\end{remark}

\setcounter{equation}{0}
\section{Partial Proof of Theorem \ref{Theorem 2.3} and some other Main Results}

In this section, we provide the necessary machinery for the partial proof of Theorem \ref{Theorem 2.3} and along the way state and 
prove several other main results. We need the following two lemmas taken from \cite{LL}. 
\begin{lemma}\label{Lemma 2.3}
 Let $w$ be admissible. Set for $n\geq 1$:
  \[ 
    h_n := \frac{n}{\sqrt{\delta_n}} \max \left \{ \frac{T(a_n)}{a_n}, \frac{T(a_{-n})}{|a_{-n}|} \right \}^{1/2} 
  \]
  and
  \[
    k_n := n^{1/6} \delta_n^{-1/3} \max \left \{ \frac{T(a_n)}{a_n}, \frac{T(a_{-n})}{|a_{-n}|} \right \}^{1/6}.
  \]
Then the following hold: 

 \begin{enumerate}
  \item[(a)] Let $0 < p \le \infty$. Then for $n \ge 1$ and $P \in \mathcal{P}_n$,
    \begin{equation*}
      \| P'w \|_{L_p(\mathbb R)} \le C h_n \| Pw \|_{L_p(\mathbb R)}.
    \end{equation*}
  \item[(b)] For $n\geq 1$,
    \begin{equation*}
      \sup_{x\in \mathbb R} |p_n(x) w(x)| \cdot |(x-a_{-n})(a_n-x)|^{1/4} \sim 1.
    \end{equation*}
  \item[(c)] For $n\geq 1$,
    \begin{equation*}
      \sup_{x\in \mathbb R} |p_n(x) w(x)| \sim k_n.
    \end{equation*}
  \item[(d)] For $n\geq 1$,
    \begin{equation*}
      \|p_n w\|_{L_p(\mathbb R)} \sim
      \begin{cases}
        \delta_n^{\frac1p-\frac12}, & 0 < p < 4, \\
        \delta_n^{-1/4}(\log (n+1))^{1/4}, & p = 4, \\
        \delta_n^{\frac1{3p}-\frac13} \max \left \{ \frac{T(a_n)}{a_n}, \frac{T(a_{-n})}{|a_{-n}|} \right \}^{\frac23(\frac14-\frac1p)}, & p > 4.
      \end{cases}
    \end{equation*}
  \item[(e)] For large enough $t>0$, there exists large enough $L>0$ such that uniformly in $t$, 
    \[
      \left| \delta_{t} \frac{T(a_{\pm t})}{a_{\pm t}} \right| \sim \left( \frac{t}{Q(a_{\pm t})} \right)^2 \le C t^{2(1-\frac{\log 2}{\log L})}
    \] 
and thus
 \[
      Q(a_{\pm t}) \ge C t^{\frac{\log 2}{\log L}}
      \quad \mbox{ and } \quad
      w(a_{\pm t}) = \exp(-Q(a_{\pm t})) \le \exp \! \left(-C t^{\frac{\log 2}{\log L}} \right).
    \]
  \item[(f)] The following hold:
\begin{itemize}
\item[(f1)] For large enough $t>0$, 
    \[
      \left| Q'(a_{\pm t}) \right| \sim t \sqrt{\frac{T(a_{\pm t})}{\delta_t |a_{\pm t}|}} \le C t^{2}. 
    \]
\item[(f2)] For fixed $L>0$, $j=0,1$ and uniformly for $t>0$, 
\[
Q^{(j)}(a_{Lt})\sim  Q^{(j)}(a_{t})
\]
and
\[T(a_{Lt})\sim T(a_{t})\].
\item[(f3)] For $t\neq 0$ and $\frac{1}{2}\leq \frac{t}{s}\leq 2$, 
\[
\left|1-\frac{a_s}{a_u}\right|\sim \frac{1}{T(a_t)}\left|1-\frac{s}{t}\right|.
\]
\item[(f4)] For fixed $L>1$ and uniformly for $t>0$, 
$a_{Lt}\sim a_t.$
\end{itemize}
\item[(g)] For $n\geq 1$,
    \begin{equation*} % \label{p'w}
      |p_n' (x_{j,n}) w(x_{j,n})| \sim \varphi_n^{-1}(x_{j,n}) | (x_{j,n} - a_{-n}) (a_n - x_{j,n}) |^{-1/4}
    \end{equation*}
    holds uniformly for all $1\leq j\leq n$.
  \item[(h)] For $n\geq 2$,
    \begin{equation*} % \label{zero}
      x_{j,n} - x_{j+1,n} \sim \varphi_n(x_{j,n})
    \end{equation*}
    holds uniformly for all $2\leq j\leq n$.
  \item[(i)]
    For $x \in (x_{j+1, n}, x_{j, n})$, $n\geq 1$ and $1\leq j\leq n$, 
    \begin{eqnarray*}
      | p_n(x) w(x) | & \le & C \min\{ |x-x_{j,n}|, |x-x_{j+1,n}| \} \\
      && \qquad \times \varphi_n^{-1}(x_{j,n})|(x_{j,n}-a_{-n})(a_n-x_{j,n})|^{-1/4}.
    \end{eqnarray*}
  \item[(j)] Let  $0 < \alpha < 1$ and $n\geq 1$.  For $x \in (a_{-\alpha n},a_{\alpha n})$,
    \begin{equation*}% \label{xaax}
      | (x-a_{-n}) (a_n-x) |^{-1} \le C \frac1{\delta_n} \max \left \{ \frac{T(a_n)}{a_n}, \frac{T(a_{-n})}{|a_{-n}|} \right \}.
    \end{equation*}
  \end{enumerate}
\end{lemma}

\begin{lemma} \label{Lemma 2.4}
  Let $w$ be admissible, $0 < p < \infty$, $0 < \alpha < 1$, and $L >0$.
  Denote the $p$th power Christoffel functions by
  $\lambda_{n,p}(w,x):=\inf_{P\in  \mathcal{P}_n}\left(\frac{\|Pw\|_{L_{p}(\mathbb R)}}{P(x)}\right)^{p}$ 
  for $n\geq 1$ and $x\in \mathbb R$.
  Then uniformly for $n \ge 1$ and $x \in [a_{-n} (1 + L \eta_{-n}), a_n (1 + L \eta_n)]$,
  \[
    \lambda_{n,p}(w, x) \sim \varphi_n(x) w^p(x).
  \]
  Moreover, there exist $C$, $n_0 > 0$ such that uniformly for $n \ge n_0$ and $x \in \mathbb R$,
  \[
    \lambda_{n,p}(w, x) \ge C \varphi_n(x) w^p(x).
  \]
\end{lemma}

\subsection{Functions of  the second kind}

Let $w$ be admissible and let $p_n$ be the $n$th degree orthonormal polynomial for $w^2$. 
We define a sequence of functions of the second kind $q_n : \mathbb R \to \mathbb R$, $n\geq 1$ by 
(cf.\ \cite{CDLM,DD1,DD2,DD3,DD4})
\[
  q_n(x) := \cpv_{\mathbb R} w^2(t) \frac{p_n(t)}{t-x} dx, \qquad n\geq 1.
\]

The following holds:

\begin{theorem}\label{Theorem 2.4}
  Let $w$ be admissible and $0 < \xi < 1$. Then,
  \begin{equation}\label{q'}
    |q_n'(x)|
      \le C \! \times \! 
        \begin{cases}
          n^{7/6} \delta_n^{-5/6} \max \left \{ \frac{T(a_n)}{a_n}, \frac{T(a_{-n})}{|a_{-n}|} \right \}^{2/3} \log h_n,
                & a_{-\xi n} \le x \le a_{\xi n}, \\
          n a_n^{-1} T(a_n) \max \left \{ \frac{T(a_n)}{a_n}, \frac{T(a_{-n})}{|a_{-n}|} \right \}^{1/2},
                & x \ge a_{\xi n}, \\
          n |a_{-n}|^{-1} T(a_{-n}) \max \left \{ \frac{T(a_n)}{a_n}, \frac{T(a_{-n})}{|a_{-n}|} \right \}^{1/2},
                & x \le a_{-\xi n}.
        \end{cases}
  \end{equation}
\end{theorem}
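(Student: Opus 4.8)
The plan is to estimate $q_n'$ from an integral representation in which the large cancellation present in $q_n$ is built in, rather than from the decomposition $q_n(x)=p_n(x)\cpv_{I}\frac{w^2(t)}{t-x}dt+(\text{associated polynomial of degree }n-1)$, whose two summands are each of size $\sim k_nw^{-1}(x)$ in the bulk of $\Delta_n$ and hence vastly larger than the bound in \eqref{q'}. Put $\phi:=w^2p_n\in C^1(I)$; since $w^2(c)=w^2(d)=0$ (so $\phi$ vanishes at $\partial I$) and condition (f) forces $Q'$, hence $\phi'=w^2(p_n'-2Q'p_n)$, to be Dini-continuous, one justifies differentiation under the principal value and a single integration by parts, obtaining, for $x\in I$,
\[
q_n'(x)=\cpv_{I}\frac{\phi'(t)}{t-x}\,dt=\cpv_{I}w^2(t)\,\frac{p_n'(t)-2Q'(t)p_n(t)}{t-x}\,dt .
\]

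Next fix a local scale $h=h(x)$: for $x$ in the bulk $[a_{-\xi n},a_{\xi n}]$ take $h\sim\varphi_n(x)$, the spacing of the zeros of $p_n$ near $x$ (Lemma~\ref{Lemma 2.3}(h)); for $x$ outside the bulk take $h$ governed by $|a_{\pm n}|\eta_{\pm n}$ near the Mhaskar--Rakhmanov--Saff numbers and by $\mathrm{dist}(x,\Delta_n)$ further out. Writing $\cpv_{I}=\int_{I\setminus(x-h,x+h)}+\cpv_{(x-h,x+h)\cap I}$, the \emph{far part} is handled with the uniform bounds $|wp_n|\le Ck_n$ and $|wp_n'|\le Ch_nk_n$ on $I$ (Lemma~\ref{Lemma 2.3}(c),(a)), which give $|w^2(p_n'-2Q'p_n)(t)|\le Ck_n|w(t)|(h_n+|Q'(t)|)$; splitting the domain into $|t-x|\le1$ and $|t-x|>1$, the first piece contributes $\lesssim k_n(h_n+\sup_{|t-x|\le1}|Q'|)\ln(1/h)$ and the second $\lesssim k_n\int_I|Q'(t)|e^{-Q(t)}dt+k_n\|w\|_{L_1(I)}$, where $\int_I|Q'|e^{-Q}$ is the total variation of $e^{-Q}$ (hence equal to $2$) and $\|w\|_{L_1(I)}<\infty$ because $T\ge\Lambda>1$ forces super-polynomial growth of $Q$. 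Since $\ln(1/h)\lesssim\ln n$ and, on the bulk, $\sup_{|t-x|\le1}|Q'|\lesssim|Q'(a_{\pm n})|\sim n\sqrt{T(a_{\pm n})/(\delta_n|a_{\pm n}|)}\le h_n$ by Lemma~\ref{Lemma 2.3}(f), the far part is $\lesssim k_nh_n\ln n$; for $x$ outside the bulk one trades the crude bound on $|Q'|$ for the exponential decay of $w$ on the side away from $\Delta_n$, which produces the region-dependent right-hand sides of \eqref{q'}.

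For the \emph{near part}, $\cpv_{(x-h,x+h)}\frac{dt}{t-x}=0$ up to a bounded term, so it equals $\int_{(x-h,x+h)}\frac{\phi'(t)-\phi'(x)}{t-x}dt+O(|\phi'(x)|)$; Taylor-expanding $\phi'=(w^2p_n)'$ and using Lemma~\ref{Lemma 2.3}(a) twice ($\|p_n''w\|_\infty\le Ch_n^2k_n$), the mean value theorem and the cancellation $w^2(x)w^{-1}(x)=w(x)\le1$, one gets $|\phi'(t)-\phi'(x)|\le C(|t-x|h_n^2k_n+k_n|Q'(t)-Q'(x)|)$ on $(x-h,x+h)$, so the near part is
\[
\lesssim\ k_nh_n^2\,h\ +\ k_n\,h^{1/2}\!\int_{(x-h,x+h)}\!\frac{|Q'(t)-Q'(x)|}{|t-x|^{3/2}}\,dt\ \lesssim\ k_nh_n^2\,h\ +\ k_n\,h^{1/2}|Q'(x)|\sqrt{T(x)/|x|},
\]
the last inequality being exactly condition (f), applied on each side of $x$ (using monotonicity of $Q'$, and that the chosen $h$ satisfies $h\le\varepsilon_1|x|/T(x)$). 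On the bulk $h\sim\varphi_n(x)$, whose relation to $h_n$ gives $k_nh_n^2h\lesssim k_nh_n$, while Lemma~\ref{Lemma 2.3}(e),(f) bound the second term by $\lesssim k_nh_n$ as well; outside the bulk one repeats the computation with the $\eta_{\pm n}$-scale and the local estimates Lemma~\ref{Lemma 2.3}(g),(i) together with the Christoffel bound Lemma~\ref{Lemma 2.4}. Adding the far and near contributions over the three ranges of $x$ yields \eqref{q'}.

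The main obstacle is precisely this near part and the off-bulk ranges: one must \emph{not} bound $|p_n(x)|$, $|p_n'(x)|$ by $w^{-1}(x)$ times their sup-norms (that is exponentially too large for $x\in\Delta_n$), so the principal-value cancellation at scale $h$ and the weighted $\mathrm{Lip}\,\tfrac12$ estimate (f) for $Q'$ have to be used in tandem, and near the soft edges $a_{\pm n}$ the correct scale, the size of $p_n,p_n'$ and the Christoffel function all change character, which is what forces the branching of \eqref{q'}. A secondary but real difficulty, since $Q$ is only $C^1$, is the rigorous justification of the differentiation under the principal value and of the integration by parts in the first step; condition (f) is exactly what makes $\phi'$ regular enough for $\cpv_{I}\frac{\phi'(t)}{t-x}dt$ to exist.
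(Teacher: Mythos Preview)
Your starting point matches the paper exactly: set $\rho_n=w^2p_n$ and write $q_n'(x)=\cpv_I\frac{\rho_n'(t)}{t-x}\,dt$, then split into a far and a near piece at some scale depending on $n$ (and possibly $x$). From there, however, the paper's execution is both simpler and, in the off-bulk regime, genuinely different from yours.

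\textbf{Near part in the bulk.} The paper does \emph{not} invoke condition~(f). It simply observes $|A_2|\le 2\varepsilon_n\|\rho_n''\|_{L_\infty(I)}$ and bounds $\rho_n''=p_n''w^2-4Q'p_n'w^2+(-2Q''+4Q'^2)p_nw^2$ using the Markov--Bernstein inequality twice together with $|Q'w^{1/2}|\le C$ and $|Q''|\le C|Q'|^2/Q$, obtaining $\|\rho_n''\|_\infty\le Ch_n^2k_n$. The single fixed scale $\varepsilon_n=h_n^{-1}$ (not $\varphi_n(x)$) then balances $|A_1|\le Ch_nk_n\ln n$ against $|A_2|\le Ch_nk_n$. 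Your route via condition~(f) is a legitimate substitute if one insists on $Q\in C^1$ only, but it is more delicate (in particular, your requirement $h\le\varepsilon_1|x|/T(x)$ fails for $x$ near $0$, so a separate argument is needed there), and it is not what the paper does.

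\textbf{Off-bulk range $x\ge a_{\xi n}$ (and symmetrically $x\le a_{-\xi n}$).} Here the paper's argument is not a repetition of the bulk one with a different scale. It uses a \emph{three-way} split at levels $n^{-10}$ and $\varepsilon_n:=a_{\xi n}-a_{\xi n/2}\sim a_n/T(a_n)$: the innermost piece $A_5$ is $O(n^{-10}\|\rho_n''\|_\infty)$; the middle piece $A_4$ is killed by the exponential smallness of $w(a_{\xi n/2})$; and the decisive far piece is estimated in $L^1$,
\[
|A_3|\le \varepsilon_n^{-1}\|\rho_n'\|_{L_1(I)}\le C\,\frac{T(a_n)}{a_n}\,h_n\|p_nw\|_{L_1(I)}\sim \frac{T(a_n)}{a_n}\,h_n\,\delta_n^{1/2}.
\]
This $L^1$ step (using $\|p_nw\|_{L_1}\sim\delta_n^{1/2}$ from Lemma~\ref{Lemma 2.3}(d)) is precisely what produces the factor $T(a_n)/a_n$ in the second line of \eqref{q'}. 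Your sketch for this region---``trade the crude bound on $|Q'|$ for the exponential decay of $w$'' and then ``repeat with the $\eta_{\pm n}$-scale and Lemmas~\ref{Lemma 2.3}(g),(i), \ref{Lemma 2.4}''---does not yield this. Exponential decay of $w$ only disposes of the contribution from $t$ near $x$; the main mass of $\rho_n'$ sits in $\Delta_n$, where $w$ is \emph{not} small, and the pointwise bounds you cite give back at best the bulk rate $k_nh_n\ln n$, not the sharper $T(a_n)a_n^{-1}h_n\delta_n^{1/2}$. So the off-bulk cases in your proposal have a real gap: the $L^1$ estimate with the specific scale $\varepsilon_n\sim a_n/T(a_n)$ is the missing ingredient.
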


\begin{proof}%[Proof of Theorem \ref{Theorem 2.4}]
  Let for $x\in \mathbb R$ and $n\geq 1$,
  \[
    \rho_n(x) := w^2(x) p_n(x).
  \]
  Then
  \[
    q_n'(x) = \cpv_{\mathbb R} \rho_n'(t) \frac{1}{t-x} dt.
  \]
  Introducing a positive sequence $\varepsilon_n$ that we shall define precisely later, we write for $x\in \mathbb R$ and $n\geq 1$
  \[
    q_n'(x) = A_1+A_2:=A_1(x) + A_2(x)
  \]
  with
  \[
    A_1 = \int_{|t-x| \ge \varepsilon_n} \rho_n'(t) \frac{1}{t-x} dt
  \]
  and
  \[
    A_2 = \cpv_{|t-x|< \varepsilon_n} \rho_n'(t) \frac{1}{t-x} dt
        = \cpv_{x- \varepsilon_n}^{x + \varepsilon_n} \frac{\rho_n'(t) - \rho_n'(x)}{t-x} dt.
  \]

  Let us collect some auxiliary results: Since for $x\in \mathbb R$
  \[
    \rho_n'(x) = p_n'(x)w^2(x) - 2 Q' p_n(x)w^2(x)
  \]
  and
  \[
    \rho_n''(x) = p_n''(x)w^2(x)- 4Q'(x)p_n'(x) w^2(x) + \left(- 2Q''(x) + 4 {Q'}^2(x) \right) p_n(x) w^2(x),
  \]
  we have for $x \in \mathbb R$, in view of Lemma \ref{Lemma 2.3}(a) and the definition of $h_n$ given in the preamble of Lemma \ref{Lemma 2.3},
  \begin{eqnarray*}
    | \rho_n'(x) w^{-1/2}(x) | 
      & \le & | p_n'(x) w(x) w^{1/2}(x) | + 2 | Q'(x) w^{1/2}(x) |\cdot | p_n(x) w(x) | \\
      & \le & C \left( h_n w^{1/2}(x) + |Q'(x) w^{1/2}(x) | \right) \| w p_n \|_{L_{\infty}(\mathbb R)}
  \end{eqnarray*}
  Continuing,
  \[
    |\rho_n'(x)|
      \le C \left( h_n w(x) + |Q'(x) w(x)| \right) \| w p_n \|_{L_{\infty}(\mathbb R)}
      \le C h_n\|w p_n\|_{L_{\infty}(\mathbb R)}.
  \]
  Using (d) and (e) of Definition \ref{Definition 1.1} and the definition of $h_n$, we have for $x\in \mathbb R$,
  \begin{eqnarray*}
    | Q''(x) w(x) | 
      & \le & C \frac{(Q'(x))^2}{|Q(x)|} w(x) \\
      &  =  & C |Q'(x)| \frac{T(x)}{x} w(x) \\
      & \le & C |Q'(x)| w(x) \max \left \{ \frac{T(a_n)}{a_n}, \frac{T(a_{-n})}{|a_{-n}|} \right \} \\
      & \le & C |Q'(x)| w(x) h_n^2 \frac{\delta_n}{n^2} \\
      & \le & C |Q'(x)| w(x) h_n
  \end{eqnarray*}
  and hence we see that
  \begin{eqnarray*}
    \lefteqn{|\rho_n''(x)|} \\
      & \le & C \left( h_n^2 w(x) + h_n | Q'(x) w(x) | + | Q''(x) w(x) | + | Q'(x) w^{1/2}(x) |^2 \right) \| w p_n \|_{L_{\infty}(\mathbb R)} \\
      & \le & C \left( h_n^2 w(x) + h_n | Q'(x) w(x) | + | Q'(x) w^{1/2}(x) |^2 \right) \| w p_n \|_{L_{\infty}(\mathbb R)}.
  \end{eqnarray*}
  Finally, in view of Lemma \ref{Lemma 2.3}(e), Lemma \ref{Lemma 2.3}(f), and the relations
  $\lim_{t \to \infty} a_t = \infty$ and $\lim_{t \to \infty} a_{-t} = -\infty$  (which follow from their definitions) 
  we see that
  \begin{equation}\label{Q'w}
    |Q'(x) w^{1/2}(x)| \le C, \quad x \in \mathbb R.
  \end{equation}

  Now we are in a position to deal with the case $a_{-\xi n} \le x \le a_{\xi n}$. We
 apply H\"older's inequality and derive
  \[
    |A_1| 
      = \left| \int_{|t-x|\ge \varepsilon_n} \rho_n'(t) \frac{1}{t-x} dt \right|
      \le \left\| \rho_n'(t) w^{-1/2} \right \|_{L_{\infty}(\mathbb R)} \left \| (\cdot-x)^{-1} w^{1/2} \right \|_{L_1(S_n)}
  \]
  where $S_n = \{t \in \mathbb R : |t-x| \ge \varepsilon_n \}$.
  An explicit calculation gives
  \[
    \left\|(\cdot-x)^{-1}w^{1/2}\right\|_{L_1(S_n)} \sim |\log \varepsilon_n|
  \]
  uniformly in $n$. Then
  \[
    |A_1| \le C h_n \| w p_n \|_{L_{\infty}(\mathbb R)} |\log \varepsilon_n|.
  \]
  For $A_2$, we observe that
  \[
    |A_2| \le 2 \varepsilon_n \| \rho_n'' \|_{L_{\infty}(\mathbb R)} \le C \varepsilon_n h_n^2 \| w p_n \|_{L_{\infty}(\mathbb R)}.
  \]
  Bearing in mind that by definition, $h_n \to \infty$ as $n \to \infty$, we see
  that $h_n^{-1}$ can be made arbitrarily small for sufficiently large $n$. Then
  \[
    |q_n'(x)| \le C h_n \| w p_n \|_{L_{\infty}(\mathbb R)} \log h_n \sim C h_n k_n \log h_n
  \]
  because of Lemma \ref{Lemma 2.3}(c).

  In view of the definitions of $h_n$ and $k_n$, this completes the proof in the first case.

  Next, we consider the case $x \ge a_{\xi n}$. 

%Here we have to distinguish two sub-cases, $\sup I = \infty$ and $\sup I < \infty$.
  
 We define for $n\geq 1$, the sequence $\varepsilon_n := a_{\xi n} - a_{\xi n/2}$. The behavior of $\varepsilon_n$ uniformly for large enough $n$ is determined by Lemma \ref{Lemma 2.3} (f3) which we recall says the following:
 \medskip
 
For $t\neq 0$ and $\frac{1}{2}\leq \frac{t}{s}\leq 2$, 
\[
\left|1-\frac{a_s}{a_u}\right|\sim \frac{1}{T(a_t)}\left|1-\frac{s}{t}\right|.
\]

In particular, for $Q(1,\alpha,\beta)$, the sequence $\varepsilon_n$ grows without bound uniformly for large $n$ and for $Q(2,\alpha,\beta,\ell,k)$, $\varepsilon_n$ tends to $0$ uniformly for large $n$.
\medskip 
We write
  \begin{equation}
    q_n'(x) = \left( \int_{|t-x|\ge \varepsilon_n} + \int_{n^{-10} <|t-x|< \varepsilon_n} + \int_{|t-x|\le n^{-10}} \right) \frac{\rho'(t)}{t-x} dt  
            = A_3 + A_4 + A_5.
  \end{equation}
  Note that this decomposition is possible since we have by definition $a_{\xi n} - a_{\xi n/2} \sim a_{\xi n} / T(a_{\xi n}) > n^{-10}$ 
  for sufficiently large $n$. 
  For $A_5$, we argue in a similar way as for $A_2$ above and find
  \begin{eqnarray*}
    |A_5| &  =  & \int_{x-n^{-10}}^{x+n^{-10}} \frac{\rho_n'(t) - \rho_n'(x)}{t-x} dt
            \le   C n^{-10} \| \rho_n'' \|_{L_{\infty}(\mathbb R)} \\
          & \le & C n^{-10} h_n^2 \| w p_n \|_{L_{\infty}(\mathbb R)} \\
          & \le & C n^{-10} h_n^2 k_n \le O(n^{-2})
  \end{eqnarray*}
  where the last inequality follows from the fact that, in view of Lemma \ref{Lemma 2.3}(f),
  \[
    \max \left \{ \frac{T(a_n)}{a_n}, \frac{T(a_{-n})}{|a_{-n}|} \right \} \le C n^2 \delta_n
  \]
  and hence
  \[
    h_n^2 k_n \le C n^{9/2} \delta_n^{-1/6} \le C n^{9/2} 
  \]
  since $\delta_n$ is an increasing sequence of positive numbers.
  For $A_4$,
  \begin{eqnarray*}
    |A_4| 
      &  =  & \int_{n^{-10} <|t-x|< \varepsilon_n} \frac{1}{|t-x|} \| \rho_n' \|_{L_{\infty}(n^{-10} <|t-x|< \varepsilon_n)} dt \\
      & \le & C \| \rho_n' \|_{L_{\infty}(n^{-10} < |t-x| < \varepsilon_n)} |\log \varepsilon_n| \\
      & \le & C h_n k_n w \left( a_{\xi n/2} \right) |\log \varepsilon_n|
        \le   C h_n k_n o(\exp(-n^{\alpha_2})) \textrm{ for some } \alpha_2 > 0.
  \end{eqnarray*}
  Here, we used the following properties: For $a_{\xi n/2} < t < 2 a_{\xi n} - a_{\xi n/2}$
  we have
  \begin{eqnarray*}
    \lefteqn{\| \rho_n' \|_{L_{\infty}(n^{-10} < |t-x| < \varepsilon_n)}} \\
      & \le & C \| \rho_n' \|_{L_{\infty}(x \ge  a_{\xi n/2})} \\
      & \le & C \sup_{x \in \mathbb R, \, x \ge a_{\xi n / 2}} \big| h_n w(x) + | Q'(x) w(x) | \, \big| \cdot \|w p_n \|_{L_{\infty}(\mathbb R)} \\
      & \le & C \sup_{x \in \mathbb R, \, x \ge a_{\xi n / 2}} \!\!\!\!\! w^{1/2}(x) 
                   \cdot \sup_{x \in \mathbb R, \, x \ge a_{\xi n / 2}}  \!\!\!\!\! \big| h_n w^{1/2}(x) + | Q'(x) w^{1/2}(x) | \, \big| \cdot \|w p_n \|_{L_{\infty}(\mathbb R)} \\
      & \le & C h_n w^{1/2} (a_{\xi n/ 2}) \| w p_n \|_{L_{\infty}(\mathbb R)} 
        \quad (\mbox{because } |Q' w^{1/2}| \mbox{ and } w^{1/2} \mbox{ are bounded} \\
      &     & \mbox{ \hspace{5cm} and } h_n \to \infty) \\
      & \le & C h_n k_n w^{1/2} (a_{\xi n/2}) \\
      & \le & C h_n k_n o(\exp(-n^{\alpha_2})) \mbox{ for some }\alpha_2 >0,
  \end{eqnarray*}
  because $w^{1/2} (a_{\xi n/2}) = O(\exp(-n^{\alpha_1}))$ for some $\alpha_1 >0$.
  
  Finally, for $|A_3|$, since $\varepsilon_n \sim a_n/T(a_n)$
  and using
  \[
    \left| (p_n w^2)'(x) \right| \le C \left( \left| p_n'(x) w(x) \right| + \left| p_n(x) w(x) \right| \right)
    \quad \mbox{ because } \| Q' w \|_{L_\infty(\mathbb R)} < \infty,
  \]
  we see
  \begin{eqnarray*}
    \| \rho_n' \|_{L_1(\mathbb R)} = \| (p_nw^2)' \|_{L_1(\mathbb R)} \le C h_n \| p_n w \|_{L_1(\mathbb R)} \sim h_n \delta_n^{1/2},
  \end{eqnarray*}
  and so we have
  \begin{eqnarray*}
    |A_3| & \le & \frac{1}{\varepsilon_n} \int_{\mathbb R} |\rho_n'(t)| dt
      \le C \frac{T(a_n)}{a_{n}} \| \rho_n' \|_{L_{1}(\mathbb R)}
      \le C \frac{T(a_n)}{a_{n}} h_n \delta_n^{1/2}.
  \end{eqnarray*}
  Then for $x \ge a_{\xi n}$
  \[
    |q_n'(x)| \le C \frac{T(a_n)}{a_{n}} h_n \delta_n^{1/2}.
  \]

  Finally, for $x \le a_{-\xi n}$ we proceed in almost the same way as for $x \ge a_{\xi n}$; the only difference is that we now obtain
  \[
    |q_n'(x)| \le C \frac{T(a_{-n})}{|a_{-n}|} h_n \delta_n^{1/2}.
  \]
 
  Therefore, we can summarize the results as follows. For $n$ large enough,
  \begin{eqnarray*}
    |q_n'(x)| 
      & \le & C \times
                 \begin{cases}
                   C h_n \| w p_n \|_{L_{\infty}(\mathbb R)} \log n,  & a_{-\xi n} \le x \le a_{\xi n}\\
                   \frac{T(a_n)}{a_{n}} h_n \delta_n^{1/2}, & x \ge a_{\xi n}\\
                   \frac{T(a_{-n})}{|a_{-n}|} h_n \delta_n^{1/2}, & x \le a_{-\xi n}.
                 \end{cases} \\
      & \sim & \begin{cases}
                   n^{7/6} \delta_n^{-5/6} \max \left \{ \frac{T(a_n)}{a_n}, \frac{T(a_{-n})}{|a_{-n}|} \right \}^{2/3} \log n,
                           & a_{-\xi n} \le x \le a_{\xi n} \\
                   n a_n^{-1} T(a_n) \max \left \{ \frac{T(a_n)}{a_n}, \frac{T(a_{-n})}{|a_{-n}|} \right\}^{1/2},
                           & x \ge a_{\xi n} \\
                   n |a_{-n}|^{-1} T(a_{-n}) \max \left \{ \frac{T(a_n)}{a_n}, \frac{T(a_{-n})}{|a_{-n}|} \right\}^{1/2},
                           & x \le a_{-\xi n}.
                 \end{cases}
  \end{eqnarray*}
\end{proof}

We also need the following result, cf.\ \cite[Lemma 3.1]{DD1}.

\begin{lemma}\label{Lemma 2.5}
  For the weights $\zeta_{jn}(\cdot)$ of the quadrature formula $Q_n$ defined in (\ref{w_jn}), we have for $x\in \mathbb R$ and $1\leq j\leq n$
  \[
    \zeta_{jn}(x) =
    \begin{cases}
      \displaystyle \frac{q_n(x_{j,n}) - q_n(x)}{(x_{j,n} - x) p_n'(x_{j,n})}, & \textrm{ if } x \neq x_{j,n},\\
      \displaystyle \frac{q_n'(x_{j,n})}{p_n'(x_{j,n})}, & \textrm{ if } x = x_{j,n}.
    \end{cases}
  \]
\end{lemma}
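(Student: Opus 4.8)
The plan is to read the weights off from the interpolatory characterization $Q_n[f;x]=I[L_n[f];x]$ recorded above. Writing the Lagrange polynomial in its fundamental form,
\[
L_n[f](t)=\sum_{j=1}^n f(x_{j,n})\,\ell_{j,n}(t),\qquad
\ell_{j,n}(t):=\frac{p_n(t)}{p_n'(x_{j,n})(t-x_{j,n})}\in\mathcal P_{n-1},
\]
and using that $I[\,\cdot\,;x]$ is linear on $\mathcal P_{n-1}$, one gets $Q_n[f;x]=\sum_{j=1}^n f(x_{j,n})\,I[\ell_{j,n};x]$; comparing with $Q_n[f;x]=\sum_j w_{j,n}(x)f(x_{j,n})$ and substituting $f=\ell_{k,n}$ (equivalently, using that $\{\ell_{j,n}\}_{j=1}^n$ is the basis of $\mathcal P_{n-1}$ dual to evaluation at the nodes) yields, for every $x\in I$,
\[
w_{j,n}(x)=I[\ell_{j,n};x]=\frac{1}{p_n'(x_{j,n})}\cpv_I w^2(t)\frac{p_n(t)}{(t-x_{j,n})(t-x)}\,dt .
\]
This integral is finite for each $x\in I$: at $t=x$ it is a principal value of a simple pole, at $t=x_{j,n}$ the factor $p_n$ kills the singularity, and integrability at the ends of $I$ follows from the moment hypothesis on $w^2$.

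Next I would treat the case $x\neq x_{j,n}$ via the partial fraction identity
\[
\frac{1}{(t-x_{j,n})(t-x)}=\frac{1}{x_{j,n}-x}\Bigl(\frac{1}{t-x_{j,n}}-\frac{1}{t-x}\Bigr).
\]
Splitting the principal value integral accordingly is legitimate since the only Cauchy singularity is at $t=x$, while $w^2(t)p_n(t)/(t-x_{j,n})$ is bounded near $t=x_{j,n}$ and its (ordinary) integral equals $q_n(x_{j,n})$. Hence
\[
w_{j,n}(x)=\frac{q_n(x_{j,n})-q_n(x)}{(x_{j,n}-x)\,p_n'(x_{j,n})},
\]
which is the first case of the lemma.

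For the node case $x=x_{j,n}$ I would argue by continuity. The map $x\mapsto w_{j,n}(x)=I[\ell_{j,n};x]$ is the finite Hilbert transform of $w^2\ell_{j,n}$, a function smooth on the interior of $I$; by the classical continuity of Cauchy principal value integrals of H\"older densities, $x\mapsto w_{j,n}(x)$ is continuous at the interior point $x_{j,n}$. Since $q_n$ is differentiable at the interior point $x_{j,n}$ — its derivative being exactly the quantity estimated in Theorem \ref{Theorem 2.4} — passing to the limit in the formula just proved gives
\[
w_{j,n}(x_{j,n})=\lim_{x\to x_{j,n}}\frac{q_n(x_{j,n})-q_n(x)}{(x_{j,n}-x)\,p_n'(x_{j,n})}=\frac{q_n'(x_{j,n})}{p_n'(x_{j,n})}.
\]
Equivalently, one may evaluate $I[\ell_{j,n};x_{j,n}]$ directly: writing $p_n(t)=p_n'(x_{j,n})(t-x_{j,n})+(t-x_{j,n})^2 r_{j,n}(t)$ with $r_{j,n}\in\mathcal P_{n-2}$ decomposes $w^2 p_n/(t-x_{j,n})^2$ into a simple-pole piece with principal value $p_n'(x_{j,n})\cpv_I w^2(t)(t-x_{j,n})^{-1}dt$ plus the polynomial piece $\int_I w^2 r_{j,n}$, and the same manipulation applied to $q_n'(x)=\frac{d}{dx}\cpv_I w^2(t)p_n(t)(t-x)^{-1}dt$ at $x=x_{j,n}$ (differentiation under the sign being licit since $p_n(x_{j,n})=0$) gives the same value. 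I expect the only genuinely delicate step to be this last one — justifying the passage of the limit, equivalently of the differentiation, through the principal value integral at the interior zero $x_{j,n}$; the rest is algebra together with the definition of $q_n$.
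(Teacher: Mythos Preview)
Your proposal is correct and follows exactly the route the paper's proof indicates: identify $w_{j,n}(x)=I[\ell_{j,n};x]$ from the interpolatory characterization, then use the explicit form $\ell_{j,n}(t)=p_n(t)/\bigl(p_n'(x_{j,n})(t-x_{j,n})\bigr)$ together with the definition of $q_n$. The paper's own proof is only a two-line sketch (``the result follows immediately''), so your partial-fraction computation for $x\ne x_{j,n}$ and your limit/continuity argument for $x=x_{j,n}$ simply supply the details the paper omits.
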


\begin{proof}
This result has been shown in \cite[Lemma 3.1]{DD1} for a narrower class of weight functions than the class 
under consideration here. However, the proof given there only exploits properties that are satisfied in the 
present situation too, and so it can be carried over directly.
\end{proof}

\begin{theorem}\label{Theorem 2.6}
Let $w$ be admissible. Let $x \in \mathbb R$, $n\geq 2$ and let $f\in \mathrm{Lip}(\gamma, w)$ for some $\gamma >0$.
Let $P_{n-1}^* \in \mathcal P_{n-1}$ satisfy
\[
\|w(f-P_{n-1}^*)\|_{L_{\infty}(\mathbb R)}
=\inf_{P\in \mathcal{P}_{n-1}}\|w(f-P)\|_{L_{\infty}(\mathbb R)}
=E_{n-1}[f]_{w,\infty}.
\]
Then, for $x\in \mathbb{R}$
\[
	|H_{w^2}[f-P_{n-1}^*;x] | \le C(1+  n^{-2}  w^{-1}(x))E_{n-1}[f]_{w,\infty} \log n,
\]
where $C$ is a constant depending on $w$, but independent of $f$, $n$, and $x$.
\end{theorem}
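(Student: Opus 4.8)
I would follow the architecture of the proof of Theorem~\ref{Theorem 2.4}. Put $g:=f-P_{n-1}^*$, so that $\|wg\|_{L_\infty(I)}=E_{n-1}[f]_{w,\infty}=:E$ and $I[f-P_{n-1}^*;x]=I[g;x]=\cpv_I w^2(t)\frac{g(t)}{t-x}\,dt$; fix a cut-off $\varepsilon_n=n^{-c}$ and split $I[g;x]=A+B$ with $A=\int_{\{t\in I:\,|t-x|\ge\varepsilon_n\}}w^2(t)\frac{g(t)}{t-x}\,dt$ and $B=\cpv_{\{t\in I:\,|t-x|<\varepsilon_n\}}w^2(t)\frac{g(t)}{t-x}\,dt$. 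Three standard facts will drive the estimates: $0<w\le 1$ on $I$ (since $Q\ge0$); $\|w\|_{L_1(I)}<\infty$ (since $T\ge\Lambda>1$ forces $Q$ to grow at least like a power $>1$ at the ends, hence $w$ to be integrable there); and $w$ is globally Lipschitz on $I$, because $|w'|=|Q'w|=|Q'w^{1/2}|\,w^{1/2}\le C$ by~\eqref{Q'w}.

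For the far part I would argue exactly as for the term $A_1$ in the proof of Theorem~\ref{Theorem 2.4}: by H\"older's inequality, $|A|\le\|wg\|_{L_\infty(I)}\,\bigl\|\tfrac{w(\cdot)}{\cdot-x}\bigr\|_{L_1(\{|t-x|\ge\varepsilon_n\})}\le E\bigl(\int_{\varepsilon_n\le|t-x|\le1}\tfrac{dt}{|t-x|}+\int_{|t-x|>1}w(t)\,dt\bigr)\le CE\ln n$, where the middle annulus contributes the logarithm (using $w\le1$ there) and the outer tail is $O(1)$ because $w\in L_1(I)$. This produces the $C(1)\,E\ln n$ term of the claimed bound.

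The near part $B$ is the crux, and where I expect the main obstacle. When $(x-\varepsilon_n,x+\varepsilon_n)\subset I$ the principal value of the constant $w^2(x)g(x)$ over the symmetric interval vanishes, so $B=\int_{|t-x|<\varepsilon_n}\frac{w^2(t)g(t)-w^2(x)g(x)}{t-x}\,dt$, and I would split the numerator as $[w(t)-w(x)]\,(wg)(t)+w(x)\bigl[(wg)(t)-(wg)(x)\bigr]$. The first summand contributes $\le CE\varepsilon_n$ by the global Lipschitz bound on $w$ and $\|wg\|_\infty=E$. For the second I would use that $wg=wf-wP_{n-1}^*$, with $wf$ in $Lip(\alpha,w)$ by hypothesis and $wP_{n-1}^*$ Lipschitz near $x$ with constant at most a fixed power of $n$ times $\|wP_{n-1}^*\|_\infty$ (from the $L_\infty$ Markov--Bernstein inequality of Lemma~\ref{Lemma 2.3}(a) together with $\|Q'w^{1/2}\|_\infty<\infty$); interpolating this smoothness information against the crude bound $|(wg)(t)-(wg)(x)|\le2E$ and integrating $\tfrac{|(wg)(t)-(wg)(x)|}{|t-x|}$ over $|t-x|<\varepsilon_n$ yields $\le CE\ln n$. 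A separately extracted innermost shell $|t-x|\le n^{-10}$, bounded crudely (as the term $A_5$ is in the proof of Theorem~\ref{Theorem 2.4}) by a power of $n$ against $h_n$, produces the factor $n^{-2}$; and $w^{-1}(x)$ enters via $|g(x)|\le w^{-1}(x)E$, both in that shell and in the one-sided leftover $\le|w^2(x)g(x)|\,|\ln\varepsilon_n|\le w(x)E\cdot C\ln n$ that appears when $x$ lies within $\varepsilon_n$ of $\partial I$. Adding the estimates for $A$ and $B$ gives $|I[g;x]|\le C(1+n^{-2}w^{-1}(x))E\ln n$.

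The delicate step — the one needing real care — is to calibrate $\varepsilon_n$ and the H\"older-versus-sup-norm split of $wg$ so that the surviving logarithm is exactly $\ln n$ and every contribution stays proportional to $E_{n-1}[f]_{w,\infty}$ with constant depending only on $w$; this rests on the direct (Jackson-type) estimate for $w$, which matches the $Lip(\alpha,w)$ data of $wf$ to $E_{n-1}[f]_{w,\infty}$ precisely at the scale $a_n/n$, together with the polynomial bound for $h_n$. One should also treat $x\in I\setminus[a_{-\xi n},a_{\xi n}]$ separately: as in the second half of the proof of Theorem~\ref{Theorem 2.4}, the far estimate for $A$ is then sharper and accounts for the tail behaviour carried by the $w^{-1}(x)$ factor.
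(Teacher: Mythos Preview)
Your far/near split at scale $\varepsilon_n$ and the far-part estimate match the paper. The genuine gap is in the near part $B$. The paper does \emph{not} split $wg$ as $wf-wP_{n-1}^*$; it subtracts the value $g(x)$, writing
\[
B=\cpv_{x-\varepsilon}^{x+\varepsilon} w^2(t)\,\frac{g(t)-g(x)}{t-x}\,dt
 \;+\; w(x)g(x)\cpv_{x-\varepsilon}^{x+\varepsilon}\frac{w^2(t)}{w(x)(t-x)}\,dt,
\]
and then invokes \emph{Kalandiya's method} to assert directly that
$w(t)\,|g(t)-g(x)|\le C\varepsilon^{-\alpha^*}|t-x|^{\alpha^*}E_{n-1}[f]_{w,\infty}$ for some $\alpha^*\in(0,\alpha/2)$. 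This is a nontrivial regularity property of the best-approximation error (using both the H\"older hypothesis on $f$ and the optimality of $P_{n-1}^*$), and it is exactly what makes the first integral $\le CE_{n-1}[f]_{w,\infty}$ with $C$ independent of $f$. The second piece is handled by the mean-value theorem, $\bigl|\cpv\frac{w^2(t)-w^2(x)}{t-x}\,dt\bigr|\le C\varepsilon\|Q'w^2\|_{L_\infty}$, giving a contribution $\le C|g(x)|\varepsilon\le Cw^{-1}(x)\varepsilon\,E_{n-1}[f]_{w,\infty}$; the choice $\varepsilon=n^{-2}$ is what produces the $n^{-2}w^{-1}(x)$ term in the statement.

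Your route breaks at the step ``interpolate the H\"older data of $wf$ and the Markov--Bernstein bound for $wP_{n-1}^*$ against $|(wg)(t)-(wg)(x)|\le 2E$, using Jackson.'' Jackson gives only the upper bound $E_{n-1}[f]_{w,\infty}\le C\,L_f\,(a_n/n)^\alpha$, never the reverse; the H\"older seminorm $L_f$ of $wf$ cannot be bounded by $E_{n-1}[f]_{w,\infty}$ times a power of $n$ (take $f$ very smooth, so $E_{n-1}[f]$ decays super-polynomially while $L_f$ stays fixed). Likewise the Lipschitz constant of $wP_{n-1}^*$ coming from Markov--Bernstein is $\le Ch_n\|wP_{n-1}^*\|_\infty\le Ch_n(\|wf\|_\infty+E)$, again depending on $f$ and not on $E$ alone. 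Your interpolation therefore leaves a factor $\ln\bigl(L_f/E_{n-1}[f]_{w,\infty}\bigr)$ rather than $\ln n$, and the constant cannot be made independent of $f$. Finally, the innermost-shell idea modelled on the term $A_5$ in the proof of Theorem~\ref{Theorem 2.4} does not transfer: there $\rho_n=w^2p_n$ is smooth and one bounds $\|\rho_n''\|_\infty$, whereas here $g=f-P_{n-1}^*$ is only H\"older-$\alpha$, so $(w^2g)'$ need not even exist.
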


\begin{proof}%[Proof of Theorem \ref{Theorem 2.6}]
We estimate:

\[
|H_{w^2}[f-P_{n-1}^*;x]|.
\]
Let $x\in \mathbb R$ and $0<\varepsilon <\infty$. Then we have
\begin{eqnarray*}
\lefteqn{\left|\int_{|t-x|\ge \varepsilon} w^2(t)\frac{f(t)-P_{n-1}^*(t)}{t-x}dt\right|} \\
&\le&  CE_{n-1}[f]_{w,\infty}
\left(\|w\|_{L_{\infty}(\mathbb R)}\log \varepsilon^{-1} \right)
\end{eqnarray*}
and
\begin{eqnarray*}
\lefteqn{ \cpv_{x-\varepsilon}^{x+\varepsilon}w^2(t)\frac{f(t)-P_{n-1}^*(t)}{t-x}dt } \\
&=&\cpv_{x-\varepsilon}^{x+\varepsilon}w^2(t)\frac{(f(t)-P_{n-1}^*(t))-(f(x)-P_{n-1}^*(x))}{t-x}dt \\
&&\quad +w(x)(f(x)-P_{n-1}^*(x))\cpv_{x-\varepsilon}^{x+\varepsilon} \frac{w^2(t)}{w(x)}\frac{dt}{t-x}.
\end{eqnarray*}
Now, 
%(Please check it),
we see that because of the weighted Lipschitz condition on $f$,
\begin{equation*}
w(t)\frac{\left|(f(t)-P_{n-1}^*(t))-(f(x)-P_{n-1}^*(x))\right|}{|t-x|^{\alpha^*}}
\le C\varepsilon^{-\gamma^*} E_{n-1}[f]_{w,\infty}
\end{equation*}
for some $\gamma^* \in (0,\gamma/2)$. Thus,
\begin{eqnarray}\label{(1)} \nonumber
\lefteqn{\left|\cpv_{x-\varepsilon}^{x+\varepsilon}w^2(t)
\frac{(f(t)-P_{n-1}^*(t))-(f(x)-P_{n-1}^*(x))}{t-x}dt\right|} \\ \nonumber
&\le& \|w\|_{L_{\infty}(\mathbb R)} \\ \nonumber
&& \quad \times \cpv_{x-\varepsilon}^{x+\varepsilon}w(t)
\frac{\left|(f(t)-P_{n-1}^*(t))-(f(x)-P_{n-1}^*(x))\right|}{|t-x|^{\gamma^*}|t-x|^{1-\gamma^*}}dt\\ \nonumber
&\le& C\varepsilon^{-\gamma^*}
\|w\|_{L_{\infty}(\mathbb R)} E_{n-1}[f]_{w,\infty}
\int_0^{\varepsilon}y^{\gamma^*-1}dy\\ \nonumber
&\le& C\varepsilon^{-\gamma^*}
\|w\|_{L_{\infty}(\mathbb R)} E_{n-1}[f]_{w,\infty} \varepsilon^{\gamma^*}\\
&\le& C\|w\|_{L_{\infty}(\mathbb R)} E_{n-1}[f]_{w,\infty}.
\end{eqnarray}
Now, we estimate
\begin{equation*}
\left|\cpv_{x-\varepsilon}^{x+\varepsilon} \frac{w^2(t)}{w(x)}\frac{dt}{t-x}\right|.
\end{equation*}
Using the mean value theorem, there exists $t_x$  between $t$ and $x$ such that
\begin{eqnarray}\label{(2)} \nonumber
\left|\cpv_{x-\varepsilon}^{x+\varepsilon} \frac{w^2(t)}{w(x)}\frac{dt}{t-x}\right|
&=&w^{-1}(x)\left|\cpv_{x-\varepsilon}^{x+\varepsilon} \frac{w^2(t)-w^2(x)}{t-x}dt\right|\\ \nonumber
&=&w^{-1}(x)\left|\cpv_{x-\varepsilon}^{x+\varepsilon} -2Q'(t_x)w^2(t_x)dt\right|\\
&\le& Cw^{-1}(x)\|Q'w^2\|_{L_{\infty}(\mathbb R)}\varepsilon
\le Cw^{-1}(x)\varepsilon.
\end{eqnarray}
Therefore, we have by (\ref{(1)}) and (\ref{(2)})
\begin{eqnarray*}
\lefteqn{ \left|\cpv_{x-\varepsilon}^{x+\varepsilon}w^2(t)\frac{f(t)-P_{n-1}^*(t)}{t-x}dt\right| } \\
&\le& C_1\|w\|_{L_{\infty}(\mathbb R)} E_{n-1}[f]_{w,\infty}
+ C_2w^{-1}(x)\varepsilon E_{n-1}[f]_{w,\infty} \\
&\le& C\left(\|w\|_{L_{\infty}(\mathbb R)} + \varepsilon  w^{-1}(x)\right)E_{n-1}[f]_{w,\infty}.
\end{eqnarray*}
Consequently, we have taking $\varepsilon=n^{-2}$,
\begin{eqnarray*}
\lefteqn{|H_{w^2}[f-P_{n-1}^*;x]|}\\
&=& \left|\cpv_{\mathbb R} w^2(t)\frac{f(t)-P_{n-1}^*(t)}{t-x}dt \right| \\
&\le&
\left|\int_{|t-x|\ge \varepsilon} w^2(t)\frac{f(t)-P_{n-1}^*(t)}{t-x}dt\right|
 + \left|\cpv_{x-\varepsilon}^{x+\varepsilon}w^2(t)\frac{f(t)-P_{n-1}^*(t)}{t-x}dt\right| \\
 &\le& CE_{n-1}[f]_{w,\infty}
\left(\|w\|_{L_{\infty}(\mathbb R)}\ln \varepsilon^{-1} \right)  \\
&& + C\left(\|w\|_{L_{\infty}(\mathbb R)} + \varepsilon  w^{-1}(x)\right) E_{n-1}[f]_{w,\infty} \\
&\le&C (1+ \varepsilon  w^{-1}(x))E_{n-1}[f]_{w,\infty} \ln \varepsilon^{-1} \\
&\le&C (1+  n^{-2}  w^{-1}(x))E_{n-1}[f]_{w,\infty} \ln n.
\end{eqnarray*}
\end{proof}

\begin{theorem}\label{Theorem 1.3_1}
Let $w$ be admissible and
assume that $0<\mu_n \! \le \! \min\{a_n \eta_n, |a_{-n}|\eta_{-n}\}$.
Let $x \in \mathbb R$, $n$ large enough and let $f\in \mathrm{Lip}(\gamma, w)$ for some $\gamma >0$. Then
\begin{equation*}
|R_n[f;x]|
\le C\left\{ (1+  n^{-2}  w^{-1}(x))\log n + \gamma_n(x)\right\}E_{n-1}[f]_{w,\infty},
\end{equation*}
where
\begin{equation*}
\gamma_n(x) :=  \|q_n\|_{\infty}\mu_n^{-1} \delta_n^{3/2}
+ \mu_n
\begin{cases}
A_n, & a_n(1+C\eta_n) \le x \le 2a_n, \\
B_n,  & a_{\xi n} \le x \le a_n(1+C\eta_n), \\
C_n,  &  a_{-\xi n} \le x \le a_{\xi n}, \\
D_n,  & a_{-n}(1+C\eta_{-n}) \le x \le a_{-\xi n}, \\
E_n, & 2a_{-n} \le x \le a_{-n}(1+C\eta_{-n}), \\
0, & \textrm{otherwise}.
\end{cases}
\end{equation*}
\end{theorem}

\begin{proof}%[Proof of Theorem \ref{Theorem 1.3_1}]
The proof of Theorem \ref{Theorem 1.3_1} is based on the fact that our quadrature formula $Q_n$ is of interpolatory type,
i.e., it is exact for all polynomials of degree $\le n-1$. Thus,
\begin{eqnarray*}
|R_n[f;x]|&=& |R_n[f-P_{n-1}^*;x]| \\
&\le& |H_{w^2}[f-P_{n-1}^*;x]|
+ \sum_{j=1}^n \frac{|\zeta_{jn}(x)|}{w(x_{j,n})}
[w(x_{j,n})|f(x_{j,n})-P_{n-1}^*(x_{j,n})|],
\end{eqnarray*}
where $P_{n-1}^*$ is the polynomial of the best uniform approximation for $f$ from $\mathcal{P}_{n-1}$
with respect to the weight function $w$.
Hence, we now have to prove that
\[
\sum_{j=1}^n \frac{|\zeta_{jn}(x)|}{w(x_{j,n})} \le C \gamma_n(x).
\]
Let $x >0$.
Assume $0<\mu_n \le C \min\{a_n \eta_n, |a_{-n}|\eta_{-n}\}$. Then
\[
\sum_{j=1}^n \frac{|\zeta_{jn}(x)|}{w(x_{j,n})}
=\sum_{\substack{j=1\\ |x-x_{j,n}| > \mu_n}}^n \frac{|\zeta_{jn}(x)|}{w(x_{j,n})}
+ \sum_{\substack{j=1\\ |x-x_{j,n}| \le \mu_n}}^n \frac{|\zeta_{jn}(x)|}{w(x_{j,n})}.
\]
For the first part, we have from Lemma \ref{Lemma 2.3}(g), Lemma \ref{Lemma 2.3}(h) and Lemma \ref{Lemma 2.5}
\begin{eqnarray*}
\lefteqn{\sum_{\substack{j=1\\ |x-x_{j,n}| > \mu_n}}^n \frac{|\zeta_{jn}(x)|}{w(x_{j,n})}} \\
&\le& C \sum_{\substack{j=1\\ |x-x_{j,n}| > \mu_n}}^n \frac{1}{w(x_{j,n})}
\left|\frac{q_n(x_{j,n})-q_n(x)}{(x_{j,n}-x)p_n'(x_{j,n})}\right| \\
&\le& C \|q_n\|_{\infty}\mu_n^{-1}
\sum_{j=1}^n (x_{j,n}-x_{j+1,n})|(x_{j,n}-a_{-n})(a_n-x_{j,n})|^{1/4}\\
&\le& C \|q_n\|_{\infty}\mu_n^{-1}
\int_{[a_{-n}(1-C\eta_{-n}),a_n(1-C\eta_{n})] } |(t-a_{-n})(a_n-t)|^{1/4} dt \\
&\le& C \|q_n\|_{\infty}\mu_n^{-1}\delta_n^{3/2}
\int_{[-1,1] } (1-u^2)^{1/4} dt \\
&\le& C \|q_n\|_{\infty}\mu_n^{-1} \delta_n^{3/2}.
\end{eqnarray*}
Now, we consider the second part:
\[
\sum_{\substack{j=1\\ |x-x_{j,n}| \le \mu_n}}^n \frac{|\zeta_{jn}(x)|}{w(x_{j,n})}.
\]
\noindent
{\bf Case 1.} $x \ge 2a_n$ :
We know that all the zeros of $p_n$ are in the interval
\[
[a_{-n}(1-C\eta_{-n}), a_n(1-C\eta_n)].
\]
Then
since
\[
\frac{\eta_n^{-1}}{T(a_n)}=\left(n^2\frac{a_n}{\delta_nT(a_n)}\right)^{1/3}=
O(n^{\alpha_1}), \quad \textrm{ for some } \alpha_1 >0,
\]
we have for some $\alpha_2, \alpha_3 >0$
\begin{eqnarray*}
|x-x_{j,n}| &\ge& a_n 
\ge \min \left\{ a_n, \frac{a_{2n}-a_n}{2}\right\} \\
&\ge& C \frac{a_n}{T(a_n)} \ge O(n^{\alpha_2})a_n\eta_n
\ge O(n^{\alpha_3})\mu_n.
\end{eqnarray*}
Therefore, we have shown that the second sum is empty. \\
For the other cases, by the mean value theorem,
there exists $\xi_{j,n}$ between $x$ and $x_{j,n}$ such that
\begin{eqnarray*}
\sum_{\substack{j=1\\ |x-x_{j,n}| < \mu_n}}^n \frac{|\zeta_{jn}(x)|}{w(x_{j,n})}
&\le& C \sum_{\substack{j=1\\ |x-x_{j,n}| < \mu_n}}^n \frac{1}{w(x_{j,n})}
\left|\frac{q_n(x_{j,n})-q_n(x)}{(x_{j,n}-x)p_n'(x_{j,n})}\right| \\
&\le& C \sum_{\substack{j=1\\ |x-x_{j,n}| < \mu_n}}^n
\left|\frac{q_n'(\xi_{j,n})}{p_n'(x_{j,n})w(x_{j,n})}\right|.
\end{eqnarray*}
\noindent
{\bf Case 2.} $a_n(1-C\eta_n) \le x \le 2a_n$ :
Since $\mu_n \le C a_n\eta_n$, we have
\begin{eqnarray*}
|x-x_{j,n}| < \mu_n
&\Rightarrow&   x-\mu_n <  x_{j,n} \le x+\mu_n \\
&\Rightarrow&   a_n(1-C\eta_n) <  x_{j,n} \le x+\mu_n.
\end{eqnarray*}
Then we have
\[
|(x_{j,n}-a_{-n})(a_n-x_{j,n})| \le \delta_n a_n\eta_n,
\]
and from (\ref{q'})
\[
|q_n'(\xi_{j,n})| \le C na_n^{-1}T(a_n) \max\left\{\frac{T(a_n)}{a_n}, \frac{T(a_{-n})}{|a_{-n}|} \right\}^{\frac12}.
\]
Using Lemma \ref{Lemma 2.3}(g) and Lemma \ref{Lemma 2.3}(h), we have
\begin{eqnarray*}
\lefteqn{ \sum_{\substack{j=1\\ |x-x_{j,n}| < \mu_n}}^n \left|\frac{q_n'(\xi_{j,n})}{p_n'(x_{j,n})w(x_{j,n})}\right| }\\
&\le& C n^{\frac56}\delta_n^{\frac1{3}}a_n^{-\frac{5}{6}}T^{\frac56}(a_n)
\max\left\{\frac{T(a_n)}{a_n}, \frac{T(a_{-n})}{|a_{-n}|} \right\}^{\frac12}
\sum_{\substack{j=1\\ |x-x_{j,n}| < \mu_n}}^n(x_{j,n}-x_{j+1,n}) \\
&\le& C\mu_n A_n.
\end{eqnarray*}
{\bf Case 3.} $ a_{\xi n} \le x \le a_n(1-C\eta_n)$ : Similarly to Case 2, since  $\mu_n \le C a_n\eta_n$,
there exists $0 < \eta_1 < 1$ such that
\begin{eqnarray*}
|x-x_{j,n}| < \mu_n
&\Rightarrow&   x-\mu_n <  x_{j,n} \le x+\mu_n \\
&\Rightarrow&   a_{\eta_1 n} <  x_{j,n} \le x+\mu_n  \le a_n(1+C\eta_n).
\end{eqnarray*}
Then we have
\[
|(x_{j,n}-a_{-n})(a_n-x_{j,n})| \le \delta_n(a_n-a_{\eta_1 n}) \le C \delta_n\frac{a_n}{T(a_n)},
\]
and from (\ref{q'})
\[
|q_n'(\xi_{j,n})| \le C na_n^{-1}T(a_n) \max\left\{\frac{T(a_n)}{a_n}, \frac{T(a_{-n})}{|a_{-n}|} \right\}^{\frac12}.
\]
Using Lemma \ref{Lemma 2.3}(g) and Lemma \ref{Lemma 2.3}(h), we have
\begin{eqnarray*}
\lefteqn{ \sum_{\substack{j=1\\ |x-x_{j,n}| < \mu_n}}^n \left|\frac{q_n'(\xi_{j,n})}{p_n'(x_{j,n})w(x_{j,n})}\right| }\\
&\le& Cn\delta_n^{\frac1{4}}a_n^{-\frac{3}{4}}T^{\frac34}(a_n)
\max\left\{\frac{T(a_n)}{a_n}, \frac{T(a_{-n})}{|a_{-n}|} \right\}^{\frac12}
\sum_{\substack{j=1\\ |x-x_{j,n}| < \mu_n}}^n(x_{j,n}-x_{j+1,n}) \\
&\le& C\mu_n B_n.
\end{eqnarray*}
{\bf Case 4.} $0 \le x \le a_{\xi n} $ : Since $\mu_n \le |a_{-n}|\eta_{-n}$ and $\mu_n \le a_{n}\eta_{n}$,
there exist constants $L >0$ and $0 < \eta_1, \eta_2 < 1$ independent of $n$ with
\begin{eqnarray*}
|x-x_{j,n}| < \mu_n
&\Rightarrow&   x-\mu_n <  x_{j,n} \le x+\mu_n \\
&\Rightarrow&   a_{-n}\eta_{-n} <  x_{j,n} \le x+\mu_n  \le a_{\xi n}(1+L\eta_n) \le  a_{\eta_2 n}\\
&\Rightarrow&   a_{-\eta_1 n} <  x_{j,n} \le x+\mu_n  \le   a_{\eta_2 n}.
\end{eqnarray*}
Then we have
\[
|(x_{j,n}-a_{-n})(a_n-x_{j,n})| \le \delta_n(a_n-a_{\eta_1 n}) \le \delta_n^2,
\]
and from (\ref{q'})
\[
|q_n'(\xi_{j,n})| \le C n^{\frac76}\delta_n^{-\frac5{6}}
\max\left\{\frac{T(a_n)}{a_n}, \frac{T(a_{-n})}{|a_{-n}|} \right\}^{\frac23}\log n.
\]
Using Lemma \ref{Lemma 2.3}(g) and Lemma \ref{Lemma 2.3}(h), we have
\begin{eqnarray*}
\lefteqn{\sum_{\substack{j=1\\ |x-x_{j,n}| < \mu_n}}^n \left|\frac{q_n'(\xi_{j,n})}{p_n'(x_{j,n})w(x_{j,n})}\right| } \\
&\le& Cn^{\frac76}\delta_n^{-\frac1{3}}
\max\left\{\frac{T(a_n)}{a_n}, \frac{T(a_{-n})}{|a_{-n}|} \right\}^{\frac23}\log n
\sum_{\substack{j=1\\ |x-x_{j,n}| < \mu_n}}^n(x_{j,n}-x_{j+1,n}) \\
&\le& C\mu_n C_n.
\end{eqnarray*}
Thus, we have for $x\geq 0$
\begin{eqnarray*}
\sum_{j=1}^n \frac{|\zeta_{jn}(x)|}{w(x_{j,n})}  &\le& C_1\|q_n\|_{\infty}\mu_n^{-1} \delta_n^{3/2}\\
&& {} + C_2 \mu_n \times
\begin{cases}
A_n, & a_n(1+C\eta_n) \le x \le \min\{\frac{d+a_n}{2}, 2a_n\} \\
B_n,  & a_{\xi n} \le x \le a_n(1+C\eta_n) \\
C_n,  &  0 \le x \le a_{\xi n}, \\
0,  & \textrm{otherwise }
\end{cases} \\
&\le&C\gamma_n(x)
\end{eqnarray*}
Similarly, we have for $x \le 0$
\begin{eqnarray*}
\sum_{j=1}^n \frac{|\zeta_{jn}(x)|}{w(x_{j,n})}  &\le& C\|q_n\|_{\infty}\mu_n^{-1} \delta_n^{3/2}\\
&& {} +C \mu_n \times
\begin{cases}
E_n,
& \max\{\frac{c+a_{-n}}{2}, 2a_{-n}\} \le x \le a_{-n}(1+C\eta_{-n}) \\
D_n,  & a_{-n}(1+C\eta_{-n}) \le x \le a_{-\xi n} \\
C_n,  &  a_{-\xi n} \le x \le 0 \\
0,  & \textrm{ otherwise }
\end{cases} \\
&\le&C\gamma_n(x).
\end{eqnarray*}
Thus, we have
\begin{eqnarray*}
\sum_{j=1}^n \frac{|\zeta_{jn}(x)|}{w(x_{j,n})}
[w(x_{j,n})|f(x_{j,n})-P_{n-1}^*(x_{j,n})|]
&\le& C E_{n-1}[f]_{w,\infty}\sum_{j=1}^n \frac{|\zeta_{jn}(x)|}{w(x_{j,n})} \\
&\le& C E_{n-1}[f]_{w,\infty}
\gamma_n(x).
\end{eqnarray*}
Consequently, we obtain using Theorem \ref{Theorem 2.6}
\begin{eqnarray*}
\lefteqn{|R_n[f;x]|} \\
&\le& |H_{w^2}[f-P_{n-1}^*;x]|
+ \sum_{j=1}^n \frac{|\zeta_{jn}(x)|}{w(x_{j,n})}
[w(x_{j,n})|f(x_{j,n})-P_{n-1}^*(x_{j,n})|] \\
&\le& |H_{w^2}[f-P_{n-1}^*;x]|
+ C E_{n-1}[f]_{w,\infty}\sum_{j=1}^n \frac{|\zeta_{jn}(x)|}{w(x_{j,n})} \\
&\le&  C_1 (1+  n^{-2}  w^{-1}(x))E_{n-1}[f]_{w,\infty} \log n
+ C_2 E_{n-1}[f]_{w,\infty}\gamma_n(x) \\
&\le& C\left\{ (1+  n^{-2}  w^{-1}(x))\log n + \gamma_n(x)\right\}E_{n-1}[f]_{w,\infty}.
\end{eqnarray*}
\end{proof}

\section{Estimation of the Functions of the Second Kind and Proof of Theorem \protect{\ref{Theorem 2.3}}}

In this section, we provide the finished proof of Theorem \ref{Theorem 2.3} 
and we shall prove upper bounds for the Chebyshev norms
of the functions of the second kind
\[
  q_n(x) := \cpv_{\mathbb R} \frac{p_n (t) w^2(t)}{t-x} dt,\quad x\in \mathbb R.
\]
Specifically, Criscuolo et al.\ \cite[Theorem 2.2(a)]{CDLM}
have shown that for large enough $n$,
\begin{equation}
  \label{eq:bound-qn}
  \| q_n \|_{L_\infty(\mathbb R)} \sim a_n^{-1/2}
\end{equation}
if $w^2$ is a symmetric weight of smooth polynomial decrease for large argument that satisfies some mild
additional smoothness conditions; cf.\ \cite[Definition 2.1]{CDLM} for
precise details. Our goal is to extend this result to a much larger
class of weight functions.

We start with an alternative representation for $q_n$. Here, again we use the notation
$\zeta_{jn}(.)$ for the weights of the Gaussian quadrature formula
with respect to the weight function $w^2$ associated to the node
$x_{jn}$.

\begin{lemma}
  \label{lem:51}
  We have the following identities for $n\geq 1$.
  \begin{enumerate}
  \item[(a)] If $x \ne x_{jn}$ for all $j$ then
    \[ q_n(x) = p_n(x) \left( \cpv_{\mathbb R} \frac{w^2(t)}{t-x} dt - \sum_{j=1}^n \frac{\zeta_{jn}(x)}{x_{jn} - x} \right). \]
  \item[(b)] For $j=1,2,\ldots,n$ we have
    \[ q_n(x_{jn}) = \zeta_{jn} p_n'(x_{jn}). \]
  \item[(c)] If $x \in (x_{\ell n}, x_{\ell-1,n})$ for some $2 \le \ell \le n$ then
    \[ |q_n(x)| \le |p_n(x)| \left( \sum_{j=\ell-1}^\ell \frac{\zeta_{jn}(x)}{|x - x_{jn}|}
                                    + \left| \cpv_{x_{\ell n}}^{x_{\ell-1,n}} \frac{w^2(t)}{x-t} dt \right| \right) . \]
  \item[(d)] If $x > x_{1n}$ then
    \[ |q_n(x)| \le  |p_n(x)| \left( \frac{\zeta_{1 n}}{|x - x_{1 n}|}
                                    + \left| \cpv_{x_{1 n}}^{d} \frac{w^2(t)}{x-t} dt \right| \right) . \]
  \item[(e)] If $x < x_{nn}$ then
    \[ |q_n(x)| \le  |p_n(x)| \left( \frac{\zeta_{n n}}{|x - x_{n n}|}
                                    + \left| \cpv_{c}^{x_{n n}} \frac{w^2(t)}{x-t} dt \right| \right) . \]
  \end{enumerate}
\end{lemma}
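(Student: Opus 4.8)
The plan is to derive all five identities from one elementary device: for fixed $x$, write $p_n(t)=p_n(x)+(t-x)R_x(t)$, where $R_x(t):=\bigl(p_n(t)-p_n(x)\bigr)/(t-x)\in\mathcal{P}_{n-1}$. Substituting this into $q_n(x)=\cpv_I p_n(t)w^2(t)/(t-x)\,dt$, the only singular contribution is the $p_n(x)/(t-x)$ term, so
\[
q_n(x)=p_n(x)\,\cpv_I\frac{w^2(t)}{t-x}\,dt+\int_I R_x(t)w^2(t)\,dt .
\]
Since $R_x\in\mathcal{P}_{n-1}\subset\mathcal{P}_{2n-1}$, the $n$-point Gaussian rule with nodes $x_{jn}$ and Christoffel numbers $\lambda_{jn}$ evaluates the last integral exactly; using $p_n(x_{jn})=0$ gives $\int_I R_xw^2=\sum_j\lambda_{jn}R_x(x_{jn})=\sum_j\lambda_{jn}\bigl(0-p_n(x)\bigr)/(x_{jn}-x)=-p_n(x)\sum_j\lambda_{jn}/(x_{jn}-x)$, which is (a).

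For (b) I would set $x=x_{jn}$ directly in the defining integral. Because $p_n(x_{jn})=0$, the integrand $R_{x_{jn}}(t)w^2(t)$ has no singularity at $t=x_{jn}$, the principal value is superfluous, Gaussian exactness applies, and $q_n(x_{jn})=\sum_k\lambda_{kn}R_{x_{jn}}(x_{kn})$; every term with $k\ne j$ vanishes, while the $k=j$ term equals $\lambda_{jn}R_{x_{jn}}(x_{jn})=\lambda_{jn}p_n'(x_{jn})$. (One can also obtain (b) from (a) by letting $x\to x_{jn}$, since the Cauchy transform of $w^2$ and the terms $\lambda_{kn}/(x_{kn}-x)$ with $k\ne j$ are continuous at the interior point $x_{jn}$, $w^2$ being $C^1$ there.)

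For (c)--(e) I would start from (a) and split \emph{both} the principal value integral over $I$ and the Gaussian sum at the node or nodes adjacent to $x$. Let $J$ be the subinterval of $I\setminus\{x_{1n},\dots,x_{nn}\}$ containing $x$ (namely $(x_{\ell n},x_{\ell-1,n})$ in case (c), $(x_{1n},d)$ in case (d), $(c,x_{nn})$ in case (e)), and let $N$ be the set (of one or two elements) of its endpoints that are zeros of $p_n$. Then
\[
\frac{q_n(x)}{p_n(x)}=\left(\cpv_J\frac{w^2(t)}{t-x}\,dt-\sum_{x_{jn}\in N}\frac{\lambda_{jn}}{x_{jn}-x}\right)+\left(\int_{I\setminus\overline{J}}\frac{w^2(t)}{t-x}\,dt-\sum_{x_{jn}\notin N}\frac{\lambda_{jn}}{x_{jn}-x}\right).
\]
By the triangle inequality and the trivial identity $\bigl|\cpv_J w^2(t)/(t-x)\,dt\bigr|=\bigl|\cpv_J w^2(t)/(x-t)\,dt\bigr|$, the modulus of the first bracket is bounded by exactly the quantity inside the parentheses of (c)--(e). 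Hence it remains to show that the second bracket -- the error of the Gaussian quadrature for the smooth kernel $1/(t-x)$ on $I\setminus\overline{J}$, with the one or two nodes nearest $x$ omitted -- is of opposite sign to the first bracket and no larger than twice its modulus, so that adding it cannot increase $\bigl|q_n(x)/p_n(x)\bigr|$; then multiplying through by $|p_n(x)|$ yields (c)--(e), the one-sided cases (d), (e) being those in which $N$ is a singleton.

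I expect this last step to be the main obstacle. The natural route is a sign analysis based on the interpolation-error representation $1/(t-x)-L_{n-1}(t)=p_n(t)/\bigl((t-x)p_n(x)\bigr)$, where $L_{n-1}$ is the degree-$(n-1)$ interpolant of $1/(t-x)$ at the $x_{jn}$ (so that $\sum_j\lambda_{jn}/(x_{jn}-x)=\int_I L_{n-1}w^2$): on each component of $I\setminus\overline{J}$ the kernel $1/(t-x)$ keeps one sign and is monotone, the Christoffel numbers are positive, and $x$ interlaces the zeros of $p_n$, so $p_n(t)/\bigl((t-x)p_n(x)\bigr)$ has a fixed sign on each subinterval between consecutive nodes; integrating against $w^2$ over $I\setminus\overline{J}$ and comparing with the omitted node contributions should pin down the sign and size of the second bracket. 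Parts (a) and (b), by contrast, are routine once polynomial division and exactness of the Gaussian rule are invoked.
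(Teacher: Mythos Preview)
Your arguments for (a) and (b) are correct and are exactly the standard ones; the paper itself gives no proof here but simply cites \cite[eqs.~(5.2)--(5.5)]{CDLM} and observes that those arguments use no special property of the weight, so in this part you have reproduced what the paper invokes.

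For (c)--(e) there is a genuine gap in what you propose. Your plan is to write $q_n(x)/p_n(x)=M+S$, where $M$ is the ``near'' bracket over $J$ together with the adjacent node terms and $S$ is the ``far'' bracket, and then to argue that $S$ has the opposite sign to $M$ with $|S|\le 2|M|$, so that $|M+S|\le|M|$ and the triangle inequality on $M$ finishes. But $M$ can be arbitrarily small compared with $S$: for instance, when $x$ sits near the midpoint of $(x_{\ell n},x_{\ell-1,n})$ and $w^2$ is roughly constant there, the principal-value integral over $J$ nearly cancels and the two node contributions nearly cancel each other, so $M\approx 0$ while $S$ need not be small. Thus the comparison you propose between $S$ and $M$ cannot hold in general.

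What actually works (and what \cite{CDLM} does) is to avoid comparing $S$ with $M$ altogether and instead split $S=S_L+S_R$ into the contribution from $(c,x_{\ell n})$ with the nodes $x_{jn}$, $j\ge\ell+1$, and the contribution from $(x_{\ell-1,n},d)$ with the nodes $x_{jn}$, $j\le\ell-2$. The Posse--Markov--Stieltjes inequalities (equivalently, your interpolation-error identity $1/(t-x)-L_{n-1}(t)=p_n(t)/\bigl((t-x)p_n(x)\bigr)$ integrated over a single one-sided tail, where the integrand has a fixed sign) give
\[
-\frac{\lambda_{\ell n}}{x-x_{\ell n}}\le S_L\le 0,
\qquad
0\le S_R\le \frac{\lambda_{\ell-1,n}}{x_{\ell-1,n}-x}.
\]
Hence $S_L+\lambda_{\ell n}/(x-x_{\ell n})\in[0,\lambda_{\ell n}/(x-x_{\ell n})]$ and $S_R-\lambda_{\ell-1,n}/(x_{\ell-1,n}-x)\in[-\lambda_{\ell-1,n}/(x_{\ell-1,n}-x),0]$, so regrouping and applying the triangle inequality yields (c) directly; (d) and (e) are the one-sided cases. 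Your final paragraph is in fact pointing at exactly this sign argument, but it should be aimed at bounding $S_L$ and $S_R$ \emph{individually} by the adjacent Christoffel terms, not at comparing the whole of $S$ with the whole of $M$.
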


\begin{proof}
Parts (a), (b), (c), and (d) are shown for a special class of weights
in \cite[eqs.\ (5.2), (5.3), (5.4) and (5.5)]{CDLM}. An inspection of
the proof immediately reveals that no special properties of the weight
functions are ever used in these proofs and thus the exact same
methods of proof can be applied in our case. Part (e) can be shown by
arguments analog to those of the proof of (d).
\end{proof}

Our main result for this section then reads as follows.
\begin{theorem}\label{Theorem 3.2}
We have uniformly for $n$ large enough,
\begin{equation}\label{q_n}
\| q_n\|_{L_\infty(\mathbb R)}
\le C\frac1{\delta_n^{1/4}}\max\left\{\frac{T(a_n)}{a_n}, \frac{T(a_{-n})}{|a_{-n}|} \right\}^{1/4}\log n.
\end{equation}
\end{theorem}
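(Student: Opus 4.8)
The plan is to use the representations from Lemma~\ref{lem:51} and split $I$ into the ``bulk'' region where $x$ lies between two consecutive zeros of $p_n$ and the two ``tail'' regions $x > x_{1n}$ and $x < x_{nn}$. In each case the estimate for $|q_n(x)|$ is a product of a bound for $|p_n(x)w(x)|$ (which we will pull out of the weight by writing $q_n w = (p_n w) \cdot (\dots)$) and a bound for the bracketed expression built from Christoffel numbers $\lambda_{jn}$ and a small principal-value integral of $w^2$ over one interval between neighboring nodes. The governing heuristic is that $\lambda_{jn} \sim \varphi_n(x_{jn}) w^2(x_{jn})$ by Lemma~\ref{Lemma 2.4}, that $|x - x_{jn}| \lesssim \varphi_n(x_{jn})$ for the relevant neighboring nodes by~(\ref{zero}), and that the principal-value integral over $(x_{\ell n}, x_{\ell-1,n})$ is $O(\|w^2\|_{L_\infty(\text{that interval})})$ since the interval has length $\sim \varphi_n(x_{\ell n})$; after cancellation these contribute the factor $w^2(x_{\ell n})$, which combines with $|p_n(x) w(x)|$ only after compensating for the fact that $w(x)$ and $w(x_{\ell n})$ differ.

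First I would handle the bulk: for $x \in (x_{\ell n}, x_{\ell-1,n})$, part (c) of Lemma~\ref{lem:51} gives
\[
|q_n(x) w(x)| \le |p_n(x) w(x)| \left( \sum_{j=\ell-1}^\ell \frac{\lambda_{jn} w^{-1}(x)}{|x - x_{jn}|} + w^{-1}(x)\left| \cpv_{x_{\ell n}}^{x_{\ell-1,n}} \frac{w^2(t)}{x-t} dt \right| \right).
\]
Using $\lambda_{jn} \sim \varphi_n(x_{jn}) w^2(x_{jn})$, $|x - x_{jn}| \sim \varphi_n(x_{jn})$ (for $j = \ell-1, \ell$ the two bracketing nodes, via~(\ref{zero}) and the monotonicity of $\varphi_n$ within a gap), and the elementary bound on the one-gap principal value, the bracket is $\le C w(x_{\ell n}) \cdot (w(x_{\ell n})/w(x))$; since $x$ and $x_{\ell n}$ lie in the same gap, $w(x_{\ell n})/w(x) \sim 1$ by the Lipschitz/regularity structure of $Q$ (this uses that $Q'$ times the gap length is bounded, which follows from (f) of Definition~\ref{Definition 1.1} and the estimate $|Q'(a_{\pm t})w^{1/2}(a_{\pm t})| \le C$ already established in~(\ref{Q'w})). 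Thus $|q_n(x) w(x)| \le C |p_n(x) w(x)| \, w(x_{\ell n}) \le C |p_n(x) w(x)|$ when $w \le 1$; more carefully one keeps $|p_n w|$ and invokes (b) of Lemma~\ref{Lemma 2.3}, $|p_n w(x)| |(x-a_{-n})(a_n-x)|^{1/4} \sim 1$, which gives $|q_n(x)| \le C |(x-a_{-n})(a_n-x)|^{-1/4}$ for $x$ away from the endpoints of $\Delta_n$; combined with the local analysis near $a_{\pm n}$ (where $|p_n w|$ is controlled by Lemma~\ref{Lemma 2.3}(c), $\sup |p_n w| \sim k_n = n^{1/6}\delta_n^{-1/3}\max\{\dots\}^{1/6}$, and $|x - x_{jn}| \gtrsim |a_n|\eta_n$ there) yields the claimed bound $\delta_n^{-1/4}\max\{T(a_n)/a_n, T(a_{-n})/|a_{-n}|\}^{1/4}$. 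The worst region is precisely the endpoint zone $x \sim a_{\pm n}$: there $|(x-a_{-n})(a_n-x)|^{1/4}$ degenerates, $|p_n w|$ peaks at $k_n$, and the gap $\varphi_n(x_{1n})$ is smallest, so one substitutes $\varphi_n(a_n) \sim a_n^{2/3}(\delta_n/T(a_n))^{?}\eta_n^{1/2}\dots$ — the precise exponent bookkeeping here is what produces the $\max\{\cdot\}^{1/4}$ and the single power of $\delta_n^{-1/4}$, and this is the step I expect to be the main obstacle.

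For the tails $x > x_{1n}$, part (d) gives $|q_n(x)| \le |p_n(x)|(\lambda_{1n}/|x - x_{1n}| + |\cpv_{x_{1n}}^d w^2(t)(x-t)^{-1} dt|)$. Here $|p_n(x)| = |p_n(x_{1n})|$-type control is lost since $p_n$ grows polynomially outside $\Delta_n$, but the weight $w^2$ decays superpolynomially (by Lemma~\ref{Lemma 2.3}(e), $w(a_{\pm t}) \le C_1\exp(-Ct^{\log 2/\log L})$), so the product $p_n(x) \cdot (\text{tail integral})$ is $O(n^{-2})$ or better, dominated by the bulk estimate; the term $\lambda_{1n}/|x-x_{1n}|$ combined with the local behavior of $p_n$ just past $x_{1n}$ reproduces, at worst, the same endpoint bound as in the bulk case. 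A symmetric argument via Lemma~\ref{lem:51}(e) covers $x < x_{nn}$. Finally, the logarithmic factor $\ln n$ enters from the region $a_{-\xi n} \le x \le a_{\xi n}$: there one cannot discard the principal-value integral over a single gap cheaply because near the center the summation of $\lambda_{jn}/|x-x_{jn}|$ over the two neighboring nodes, when compared against the global bound, picks up a harmonic-sum factor $\sum 1/j \sim \ln n$ (this is the same mechanism that produced $\ln n$ in the bound~(\ref{q'}) for $q_n'$); assembling the three zones — center with $\ln n$, shoulders $a_{\xi n} \le |x| \le a_n(1-C\eta_n)$ without it, and the endpoint/tail zone — and taking the maximum gives exactly~(\ref{q_n}), uniformly in $n$.
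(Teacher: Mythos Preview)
Your bulk strategy via Lemma~\ref{lem:51}(c) is the paper's strategy too, but the execution you sketch has two concrete gaps, and your tail strategy is not the paper's and does not work as stated.

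\textbf{The tail.} For $x > x_{1n}$ you invoke Lemma~\ref{lem:51}(d) and argue that ``$w^2$ decays superpolynomially \dots\ so the product $p_n(x)\cdot(\text{tail integral})$ is $O(n^{-2})$.'' But the bracket in Lemma~\ref{lem:51}(d) carries no factor $w(x)$: it contains $\lambda_{1n}\sim \varphi_n(x_{1n})w^2(x_{1n})$ (a fixed number) and an integral of $w^2(t)$ over $t\in(x_{1n},d)$. Nothing there cancels the growth of the \emph{unweighted} $|p_n(x)|$ once $x$ moves well past $a_n$; in the unbounded case $|p_n(x)|\sim|x|^n$ while the bracket is only $O(1/|x|)$, so the inequality in (d) becomes useless even though $q_n(x)\to 0$. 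The paper therefore does \emph{not} use Lemma~\ref{lem:51}(d)/(e) for the tail at all: for $x\ge a_{n/2}$ it splits the defining integral directly as $I_1+I_2+I_3$ at scales $\varepsilon_n=a_{n/2}-a_{n/4}$ and $n^{-10}$, estimates $I_1$ by H\"older with $\|p_nw\|_{L_4}\sim\delta_n^{-1/4}(\ln n)^{1/4}$, kills $I_2$ using $w(a_{n/4})$, and handles $I_3$ by Markov--Bernstein. That is where the factor $\delta_n^{-1/4}\max\{\cdot\}^{1/4}\ln n$ first appears.

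\textbf{The source of $\ln n$.} You attribute the logarithm to ``a harmonic-sum factor $\sum 1/j\sim\ln n$'' coming from $\sum\lambda_{jn}/|x-x_{jn}|$. But Lemma~\ref{lem:51}(c) has exactly \emph{two} terms in that sum, not $n$; there is no harmonic sum. In the paper the $\ln n$ arises (i) in the tail from the H\"older step above, and (ii) in the bulk from the estimate $\ln\bigl((x_{\ell-1,n}-x_{\ell n})/h\bigr)\le C\ln T(a_n)\le C\ln n$ after choosing $h=\min\{n^{-1}\sqrt{\delta_n a_n/T(a_n)},\,x-x_{\ell n},\,x_{\ell-1,n}-x\}$.

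\textbf{The bulk principal value.} Your claim that the one-gap principal value is ``$O(\|w^2\|_{L_\infty(\text{that interval})})$'' is false pointwise: for constant $w$ the integral equals $\ln\frac{x-x_{\ell n}}{x_{\ell-1,n}-x}$, which blows up near either node. What saves the product with $|p_n(x)|$ is exactly the vanishing of $p_n$ there, quantified by Lemma~\ref{Lemma 2.3}(i). The paper makes this precise by the $h$-splitting $I_4+I_5+I_6$ and, in the case $h=x-x_{\ell n}$, the bound $\tfrac{x-x_{\ell n}}{x_{\ell-1,n}-x_{\ell n}}\ln\tfrac{x_{\ell-1,n}-x_{\ell n}}{x-x_{\ell n}}\le C$. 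Your sketch gestures at this but does not supply the mechanism; without it the bulk estimate is incomplete.
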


\begin{proof}
We first consider the case that $x \ge a_{n/2}$. Then we write
\[ q_n(x) = \cpv_{\mathbb R} \frac{p_n (t) w^2(t)}{t-x} dt=I_1 + I_2 + I_3 \]
where $\varepsilon_n=a_{n/2}-a_{n/4}$ and
\begin{eqnarray*}
  I_1 &=& \int_{|t-x| \ge \varepsilon_n} \frac{p_n (t) w^2(t)}{t-x} dt, \\
  I_2 &=& \int_{n^{-10} < |t-x| < \varepsilon_n} \frac{p_n (t) w^2(t)}{t-x} dt, \\
  I_3 &=& \cpv_{x - n^{-10}}^{x + n^{-10}} \frac{p_n (t) w^2(t)}{t-x} dt.
\end{eqnarray*}
It then follows that
\begin{eqnarray*}
  |I_1|
    &\le& C\frac1{\varepsilon_n^{1/4}}  \int_{|t-x| \ge \varepsilon_n} |p_n (t) w^2(t)| dt \\
    &\le& C\frac1{\varepsilon_n^{1/4}}
    \left\|\left(\frac{w(t)}{|t-x|^{3/4}}\right)\right\|_{L_{\frac43}(|t-x| \ge \varepsilon_n)} \|p_nw\|_{L_4(I)} \\
    &\le& C \left(\frac{T(a_n)}{a_n}\right)^{1/4} (\log n)^{3/4} \delta_n^{-\frac14}(\log (n+1))^{\frac14}\\
    &\le& C \frac1{\delta_n^{1/4}}\max\left\{\frac{T(a_n)}{a_n}, \frac{T(a_{-n})}{|a_{-n}|} \right\}^{1/4}\log n.
\end{eqnarray*}
Here, we proceeded as follows: Since
\begin{eqnarray*}
\int_{|t-x| \ge \varepsilon_n} \frac{w^{4/3}(t)}{|t-x|} dt
&\le& \int_{|t-x| \ge 1 } \frac{w^{4/3}(t)}{|t-x|} dt
+ \int_{ \min\{\varepsilon_n, 1\} < |t-x| \le 1 } \frac{w^{4/3}(t)}{|t-x|} dt  \\
&\le& C_1 + \int_{ \min\{\varepsilon_n, 1\} < |t-x| \le 1 } \frac{1}{|t-x|} dt \\
&\le& C\log n,
\end{eqnarray*}
we see
\[
\left\|\left(\frac{w(t)}{|t-x|^{3/4}}\right)\right\|_{L_{\frac43}(|t-x| \ge \varepsilon_n)}
\le C (\log n)^{3/4}.
\]
Moreover,
\begin{eqnarray*}
  |I_2|
    & \le & \|p_n w\|_{L_\infty(\mathbb R)} w(a_{n/4}) \int_{n^{-10} < |t-x| < a_{n/4}} \frac{dt}{t-x} \\
    & \le & C \|p_n w\|_{L_\infty(\mathbb R)} w(a_{n/4}) \log n  \\
    & \le & C \frac1{\delta_n^{1/4}}\max\left\{\frac{T(a_n)}{a_n}, \frac{T(a_{-n})}{|a_{-n}|} \right\}^{1/4}\log n.
\end{eqnarray*}
in view of the decay behaviour of $w(a_{n/4})$.
Finally,
\begin{eqnarray*}
  |I_3|
    & = & \left| \cpv_{x - n^{-10}}^{x + n^{-10}} \frac{p_n (t) w^2(t) - p_n (x) w^2(x)}{t-x} dt \right| \\
    & \le & 2 n^{-10} \| (p_n w^2)' \|_{L_\infty(I)} \\
    & \le&C\frac1{\delta_n^{1/4}}\max\left\{\frac{T(a_n)}{a_n}, \frac{T(a_{-n})}{|a_{-n}|} \right\}^{1/4}\log n,
\end{eqnarray*}
using $\|Q'w\| < \infty$,
and $\left|(p_nw^2)'\right| \le C \left( \left|p_n'(x)w(x)\right| +\left|p_n(x)w(x)\right|\right)$.

Thus we conclude that for  $x \ge a_{n/2}$
\[
  |q_n(x)| \le C \frac1{\delta_n^{1/4}}\max\left\{\frac{T(a_n)}{a_n}, \frac{T(a_{-n})}{|a_{-n}|} \right\}^{1/4}\log n.
\]
The bound for $x \le a_{-n/2}$ follows by an analog argument.

It remains to show the required inequality for $a_{-n/2}\le x \le a_{n/2}$. We split up this case into a number of sub-cases.
First, if $x = x_{jn}$ for some $j$, then we obtain from Lemma \ref{Lemma 2.3}(g) and Lemma \ref{Lemma 2.4} that
\begin{eqnarray*}
  q_n(x_{jn}) &=& \zeta_{jn}(x_{jn}) p_n'(x_{jn}) \\
         & \sim & |(x_{j,n}-a_{-n})(a_n-x_{j,n})|^{-1/4}.
\end{eqnarray*}
In the remaining case, $x$ does not coincide with any of the zeros of
the orthogonal polynomial $p_n$. We only treat the case $x \ge 0$
explicitly; the case $x<0$ can be handled in a similar fashion.
In this situation, we have that $x \in (x_{\ell n}, x_{\ell-1, n})$
for some $\ell \in \{2,3,\ldots,n\}$, and therefore we may invoke the
representation from Lemma \ref{lem:51}(c). This yields
\[
   |q_n(x)| \le  \sum_{j=\ell-1}^\ell \frac{\zeta_{jn}(x)|p_n(x)|}{|x - x_{jn}|}
                                    + \left| p_n(x) \cpv_{x_{\ell n}}^{x_{\ell-1,n}} \frac{w^2(t)}{x-t} dt \right|
\]
Here we first look at the two terms inside the summation
operator. From Lemma \ref{Lemma 2.3} (i), Lemma \ref{Lemma 2.4} and Lemma \ref{Lemma 2.3}(j)
we have for $x \in (x_{j+1, n}, x_{j, n})$,
\begin{eqnarray*}
  \label{eq:qn1}
\frac{\zeta_{jn}(x)|p_n(x)|}{|x - x_{jn}|}
&\le& C w^{-1}(x)w^2(x_{j,n})|(x_{j,n}-a_{-n})(a_n-x_{j,n})|^{-1/4}  \\
&\le& C\frac1{\delta_n^{1/4}}\max\left\{\frac{T(a_n)}{a_n}, \frac{T(a_{-n})}{|a_{-n}|} \right\}^{1/4}.
\end{eqnarray*}
Moreover, the remaining term can be bounded as follows. We define
\[
  h := \min \left\{ \frac{1}{n}\sqrt{\frac{\delta_n a_n}{T(a_n)}} , x - x_{\ell n}, x_{\ell-1,n} - x \right\}
\]
and write
\[
  \cpv_{x_{\ell n}}^{x_{\ell-1,n}} \frac{w^2(t)}{x-t} dt = I_4 + I_5 + I_6
\]
where
\begin{eqnarray*}
  I_4 &=& \int_{x_{\ell n}}^{x-h} \frac{w^2(t)}{x-t} dt, \\
  I_5 &=& \cpv_{x-h}^{x+h} \frac{w^2(t)}{x-t} dt, \\
  I_6 &=& \int_{x + h}^{x_{\ell-1,n}} \frac{w^2(t)}{x-t} dt.
\end{eqnarray*}
Looking at $I_5$ first and using the definition of $h$ and the monotonicity
properties of $w$ and $Q'$, we obtain
\begin{eqnarray*}
  |I_5|
    & = & \left| \int_{x-h}^{x+h} \frac{w^2(t) - w^2(x)}{x-t} dt \right|
     \le  2 h \|(w^2)'\|_{L_\infty[x-h,x+h]} \\
    & \le & 4 h w^2(x-h) Q'(x+h) \le C w^2(x_{\ell n}),
\end{eqnarray*}
because
\[
|h Q'(x+h)| \le C \frac{1}{n}\sqrt{\frac{\delta_n a_n}{T(a_n)}}Q'(a_n) \le C.
\]
Thus, we see by Lemma \ref{Lemma 2.3}(b)
\[
  |p_n(x)| \cdot |I_5|
    \le C w^2(x_{\ell n}) w^{-1}(x) |(x-a_{-n})(a_n-x)|^{-1/4}.
\]
Moreover, we know that our function $w$ is decreasing in $(0,\infty)$
and satisfies $w(x) \sim 1$ whenever $x$ is confined to a fixed finite
interval. Thus,
\begin{eqnarray}
  \nonumber
  |I_4|
    &\le& C w^2(x_{\ell n})  \int_{x_{\ell n}}^{x-h} \frac{dt}{x-t}
     =  C w^2(x_{\ell n}) \log \frac{x-x_{\ell n}}{h} \\
  \label{eq:i4a}
    &\le& C w^2(x_{\ell n}) \log \frac{x_{\ell-1,n}-x_{\ell n}}{h}.
\end{eqnarray}
Another estimate for the quantity $|I_4|$ will also be useful later:
We can see that
\begin{equation}
  \label{eq:i4b}
  |I_4|
     \le \frac1h \int_{x_{\ell n}}^d w^2(t) dt
     \le \frac1{2 h Q'(x_{\ell n})} \int_{x_{\ell n}}^d 2 Q'(t) w^2(t) dt
     =   \frac{w^2(x_{\ell n})}{2h Q'(x_{\ell n})}.
\end{equation}
Using essentially the same arguments, we can provide corresponding
bounds for $|I_6|$, viz.
\begin{equation}
  \label{eq:i6a}
  |I_6| \le C w^2(x_{\ell n}) \log \frac{x_{\ell-1,n}-x_{\ell n}}{h}
\end{equation}
and
\begin{equation}
  \label{eq:i6b}
  |I_6| \le  \frac{w^2(x_{\ell n})}{2h Q'(x_{\ell n})}.
\end{equation}
Now we recall that $h$ was defined as the minimum of three quantities
and we check with which of these quantities it coincides.
\begin{itemize}
\item  If $h = \frac{1}{n}\sqrt{\frac{\delta_n a_n}{T(a_n)}}$ then we use eqs.\ (\ref{eq:i4a}), (\ref{eq:i6a}),
Lemma \ref{Lemma 2.3} (b) and Lemma \ref{Lemma 2.3}(j) to obtain
  \begin{eqnarray*}
    (|I_4| + |I_6|) \cdot |p_n(x)|
      & \le & C w^2(x_{\ell n}) |p_n(x)|
              \log \left\{(x_{\ell-1,n}-x_{\ell n})n\sqrt{\frac{T(a_n)}{\delta_n a_n}}\right\} \\
      & \le & C w^2(x_{\ell n}) |p_n(x)|\log (T(a_n)) \\
      & \le & C  w^2(x_{\ell n}) w^{-1}(x)|(x-a_{-n})(a_n-x)|^{-1/4}\log (T(a_n))\\
      & \le & C  w^2(x_{\ell n}) w^{-1}(x)\frac1{\delta_n^{1/4}}\max\left\{\frac{T(a_n)}{a_n}, \frac{T(a_{-n})}{|a_{-n}|} \right\}^{1/4}\log n
  \end{eqnarray*}
because  we see from Lemma \ref{Lemma 2.3}(h)
\[
(x_{\ell-1,n}-x_{\ell n}) \sim \varphi_n(x) \le C\frac{\sqrt{\delta_n a_nT(a_n)}}{n}.
\]
\item If $h = x - x_{\ell n}$ then we require the known bound
  \[
    \left| \frac{p_n(x) w(x)}{x - x_{jn}}\right|
    \le C \varphi_n^{-1}(x_{j,n})|(x_{j,n}-a_{-n})(a_n-x_{j,n})|^{-1/4}.
  \]
  that holds uniformly for all $j$ and all $x \in \mathbb R$
   to derive that
  \begin{eqnarray*}
    \lefteqn{(|I_4| + |I_6|) \cdot |p_n(x)| } \\
      & \le & C w^2(x_{\ell n}) (x - x_{\ell n}) w^{-1}(x)
      \varphi_n^{-1}(x_{j,n}) \log \frac{x_{\ell-1,n} - x_{\ell n}}{x - x_{\ell n}} \\
      && \quad \times |(x_{j,n}-a_{-n})(a_n-x_{j,n})|^{-1/4}\\
             & \le & C w^2(x_{\ell n}) \frac{x - x_{\ell n}}{x_{\ell-1,n} - x_{\ell n}} w^{-1}(x)
         \log \frac{x_{\ell-1,n} - x_{\ell n}}{x - x_{\ell n}} \\
         && \quad \times |(x_{j,n}-a_{-n})(a_n-x_{j,n})|^{-1/4}\\
      & \le & C w^2(x_{\ell n}) w^{-1}(x)
      |(x_{j,n}-a_{-n})(a_n-x_{j,n})|^{-1/4}\\
       & \le & C  w^2(x_{\ell n}) w^{-1}(x)\frac1{\delta_n^{1/4}}\max\left\{\frac{T(a_n)}{a_n}, \frac{T(a_{-n})}{|a_{-n}|} \right\}^{1/4}\log n.
  \end{eqnarray*}
  where we used Lemma \ref{Lemma 2.3}(h) about the spacing of the nodes $x_{jn}$ and Lemma \ref{Lemma 2.3}(j).
\item The final case $h = x_{\ell-1,n} - x$ is essentially the same as
  the previous one and leads to the same bounds.
\end{itemize}

Combining eq.\ (\ref{eq:qn1}) with the estimates for $I_4$, $I_5$ and
$I_6$ we thus obtain, for our range of $x$,
\begin{eqnarray*}
  |q_n(x)|
   & \le & C  w^2(x_{\ell n}) w^{-1}(x)\frac1{\delta_n^{1/4}}\max\left\{\frac{T(a_n)}{a_n}, \frac{T(a_{-n})}{|a_{-n}|} \right\}^{1/4}\log n\\
    & \le & C  \frac1{\delta_n^{1/4}}\max\left\{\frac{T(a_n)}{a_n}, \frac{T(a_{-n})}{|a_{-n}|} \right\}^{1/4}\log n.
\end{eqnarray*}
\end{proof}

\begin{proof}[Proof of Theorem \protect{\ref{Theorem 2.3}}]
Lemma \ref{Lemma 2.4} and Theorem \ref{Theorem 1.3_1} imply the result.
\end{proof}

\begin{remark}
We guess that the factor of $\ln n$ may be made smaller in the upper bound above for large enough $n$ when $T\sim 1$, i.e., when $Q$ is of smooth polynomial growth for large argument. Indeed, the sequence $\varepsilon_n$ then grows without bound uniformly for large enough $n$. When $T$ grows without bound for large argument, i.e., when $Q$ is of smooth faster than polynomial growth for large argument, this is not the case. See Lemma \ref{Lemma 2.3} (f3) and the proof of Theorem \ref{Theorem 2.4}.
\end{remark}

\setcounter{equation}{0}
\section{Numerical examples}
\label{sec:num-examples}

In this section, we provide some numerical results to illustrate our
theoretical findings. As the algorithmic aspects regarding the concrete implementation
are not within the main focus of this paper, we have relegated the discussion of
such details to Appendix \ref{sec:num-comments}.

In all our examples, we have chosen the Freud-type weight function $w = \exp(-Q)$ with
the external field $Q(t) = t^4$.
	
\begin{example}
	\label{ex:1}
	 The first example deals with the function $f(t) = \sin t$. We have computed 
	 the values of $H_{w^2}[f;x]$ numerically for $x \in \{ 0.01, 0.1, 0.5, 1, 2 \}$.
	 The approximation was done with our algorithm with $n$ nodes, where $n \in \{1, 2, 3, \ldots, 30 \}.$
	 In Figures \ref{fig:sin1}--\ref{fig:sin5}, we plot the associated absolute errors versus the number of
	 nodes. All plots have a logarithmic scale on the vertical axis. Note that not all values of~$n$
	 are included in all plots. This is due to the fact that, for larger values of $n$, the relative error was 
	 smaller than machine accuracy, so the errors were effectively zero in these cases, which precludes
	 their inclusion into a logarithmic plot. The rapid (essentially exponential)
	 decay of the errors as $n$ increases is clearly evident.
\end{example}

\begin{figure}[h]
	\centering
	\includegraphics[width=0.6\textwidth]{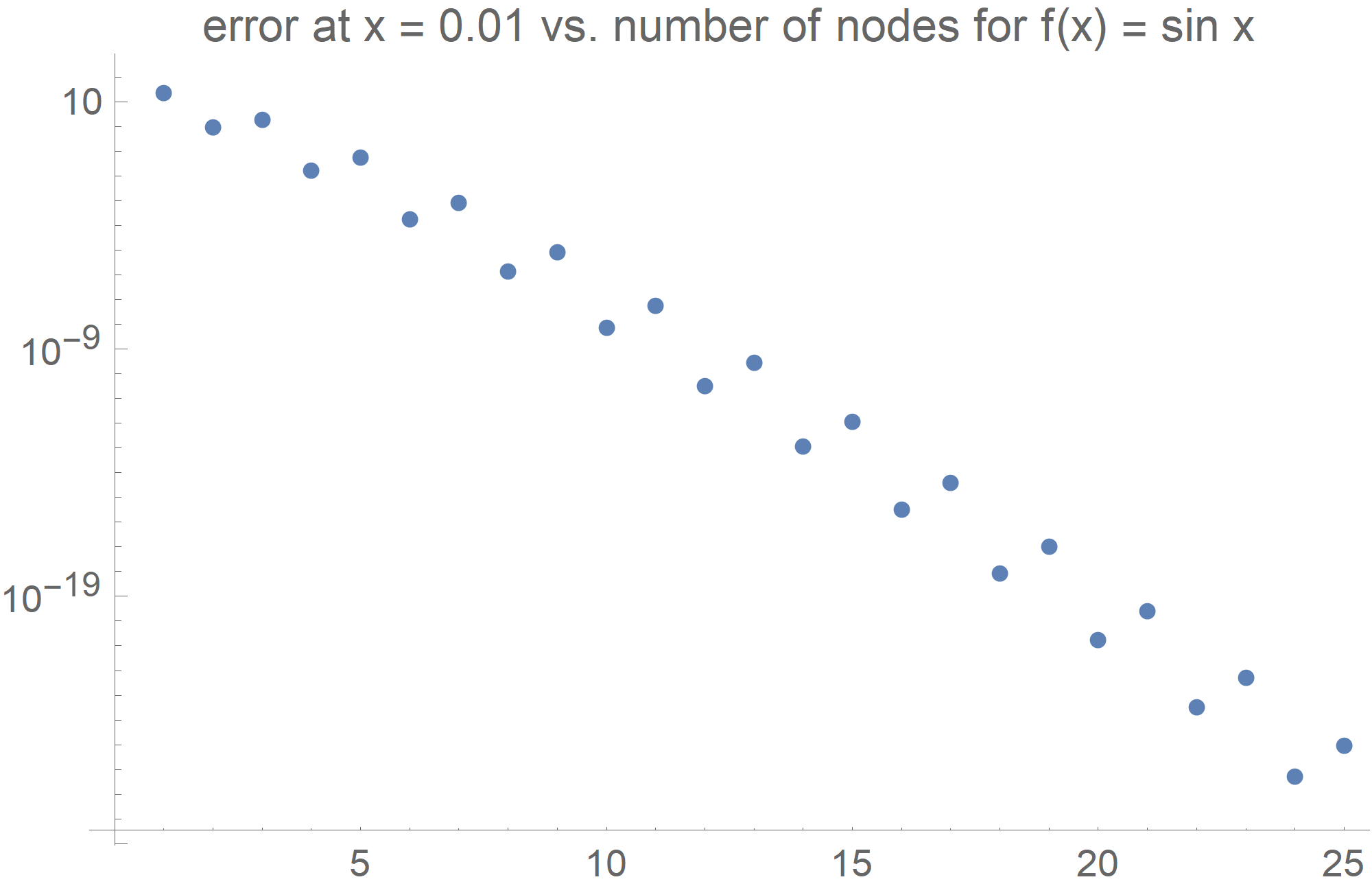}
	\caption{\label{fig:sin1}Absolute errors of $H_{w^2}[f;x]$ when computed by our algorithm 
		vs.\ the number $n$ of nodes for $f(t) = \sin t$ and $x = 0.01$.}
\end{figure}

\begin{figure}[h]
	\centering
	\includegraphics[width=0.6\textwidth]{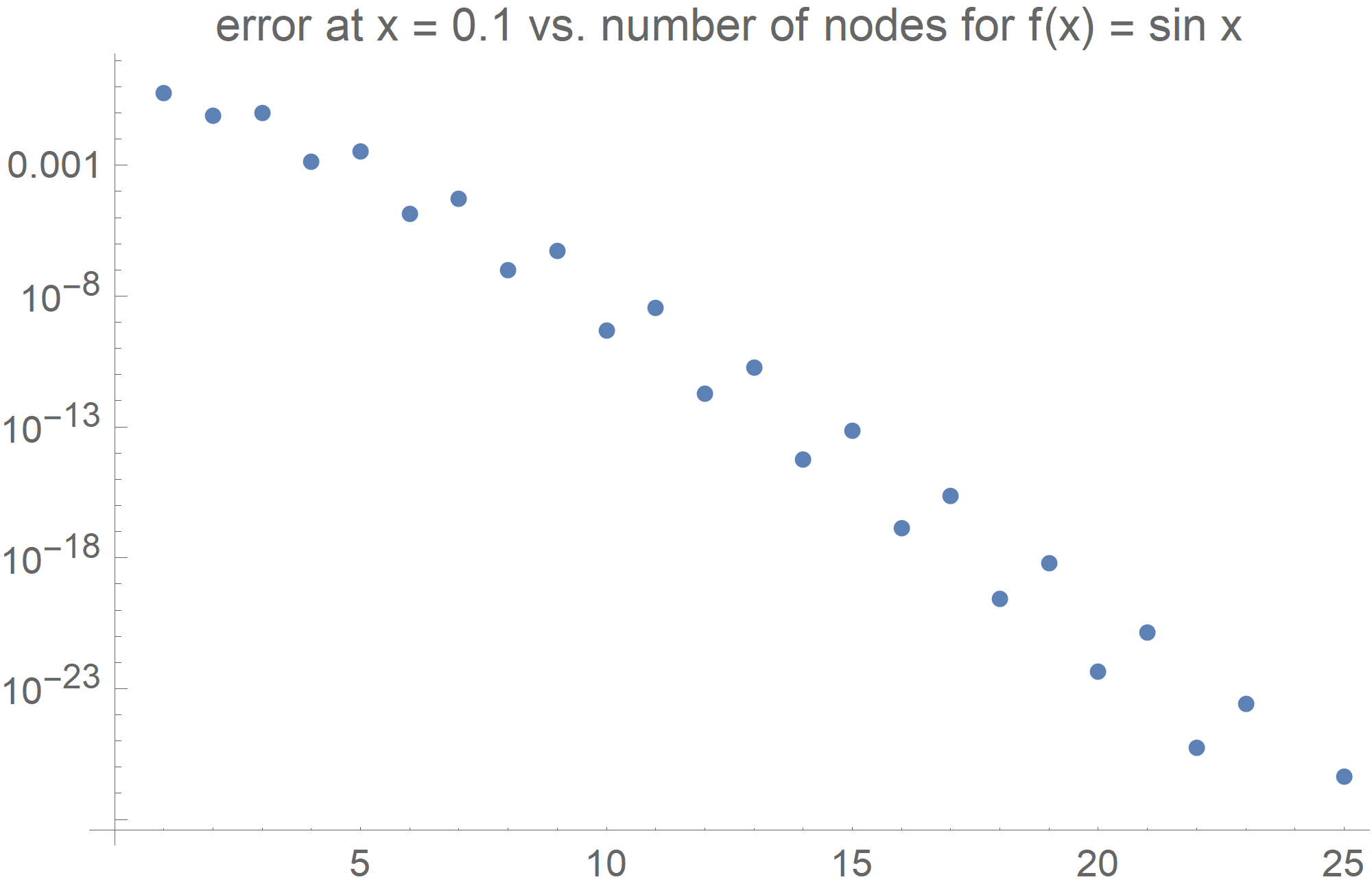}
	\caption{\label{fig:sin2}Absolute errors of $H_{w^2}[f;x]$ when computed by our algorithm 
		vs.\ the number $n$ of nodes for $f(t) = \sin t$ and $x = 0.1$.}
\end{figure}

\begin{figure}[h]
	\centering
	\includegraphics[width=0.6\textwidth]{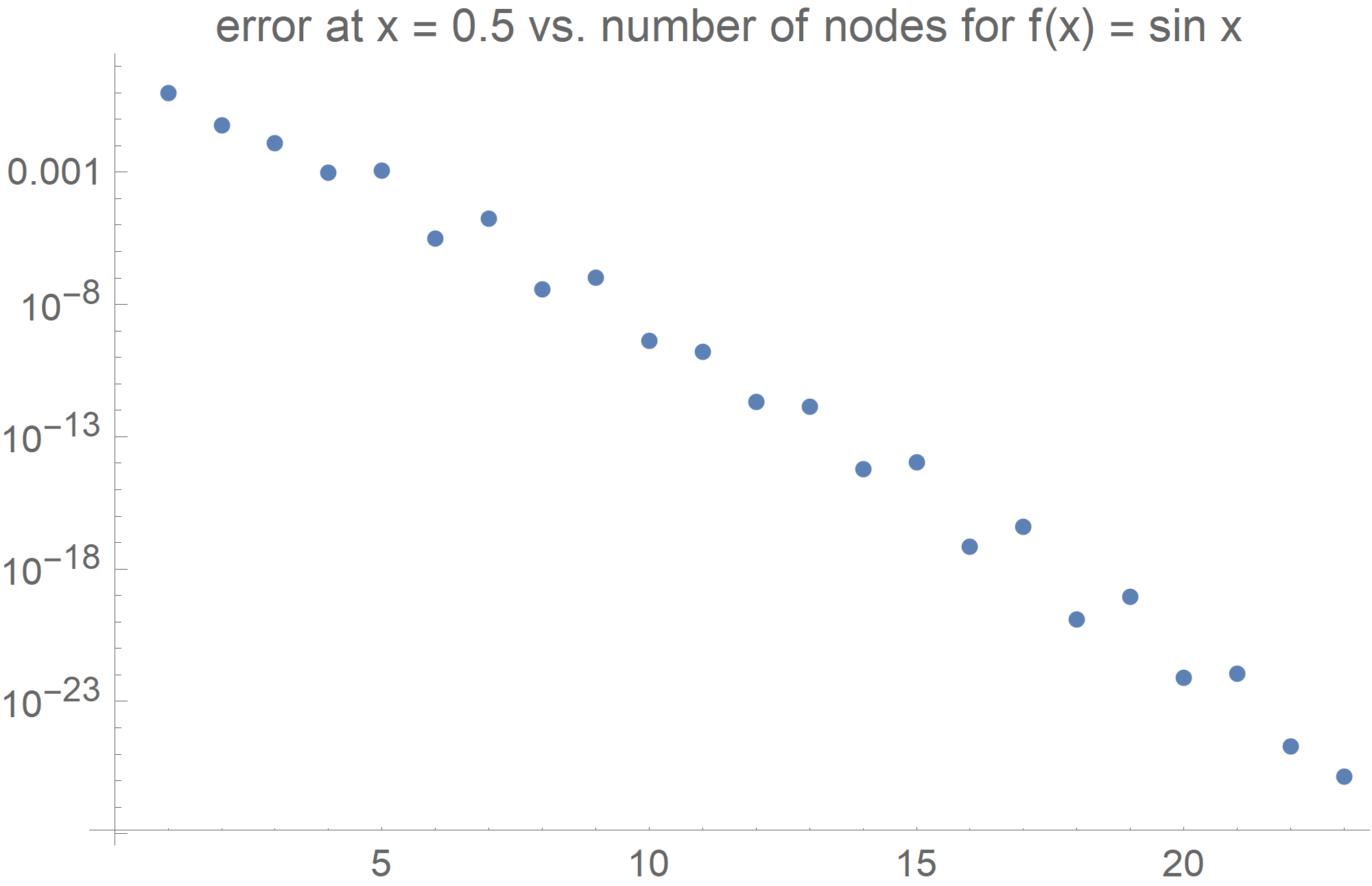}
	\caption{\label{fig:sin3}Absolute errors of $H_{w^2}[f;x]$ when computed by our algorithm 
		vs.\ the number $n$ of nodes for $f(t) = \sin t$ and $x = 0.5$.}
\end{figure}

\begin{figure}[h]
	\centering
	\includegraphics[width=0.6\textwidth]{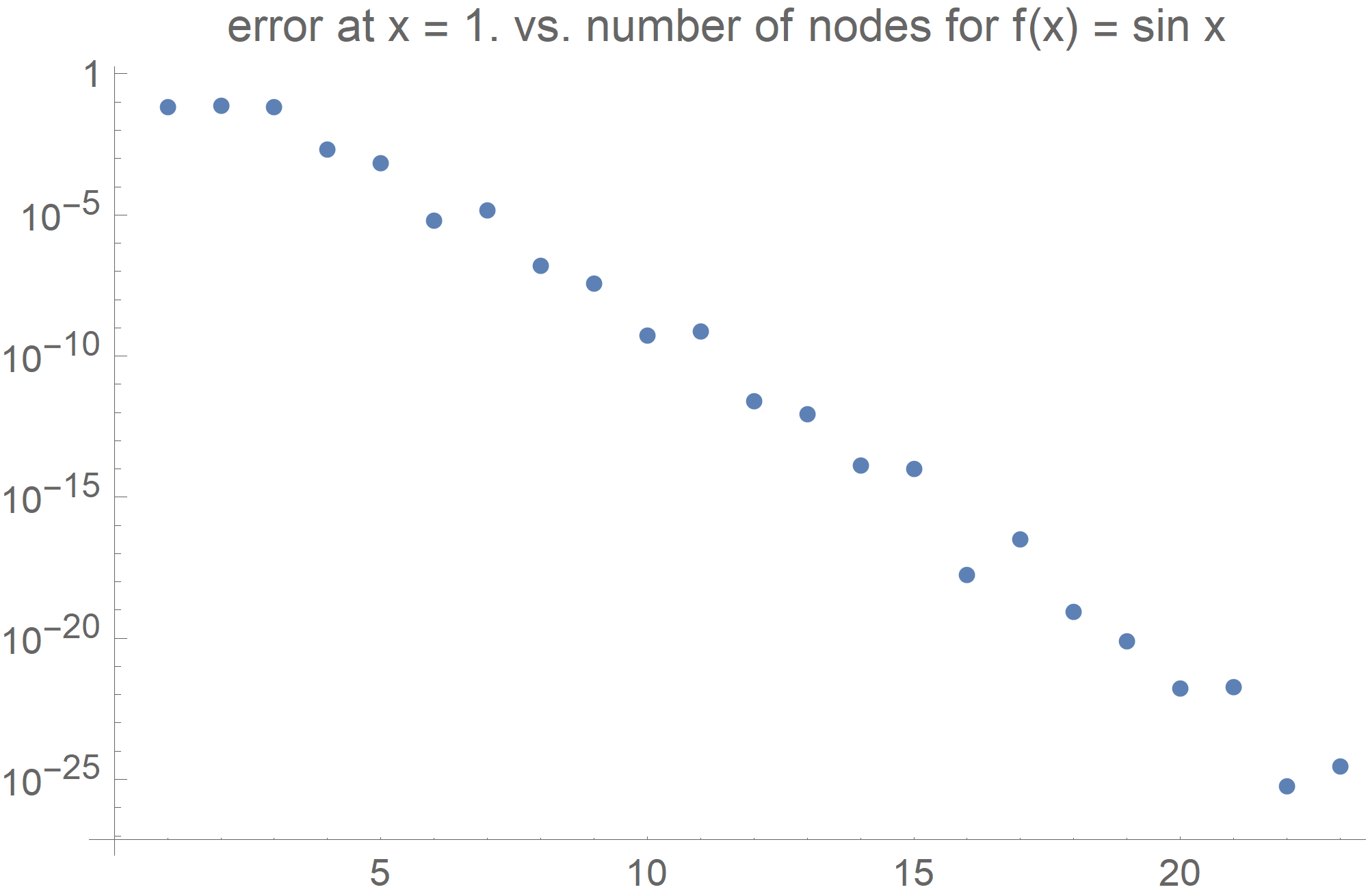}
	\caption{\label{fig:sin4}Absolute errors of $H_{w^2}[f;x]$ when computed by our algorithm 
		vs.\ the number $n$ of nodes for $f(t) = \sin t$ and $x = 1$.}
\end{figure}

\begin{figure}[h]
	\centering
	\includegraphics[width=0.6\textwidth]{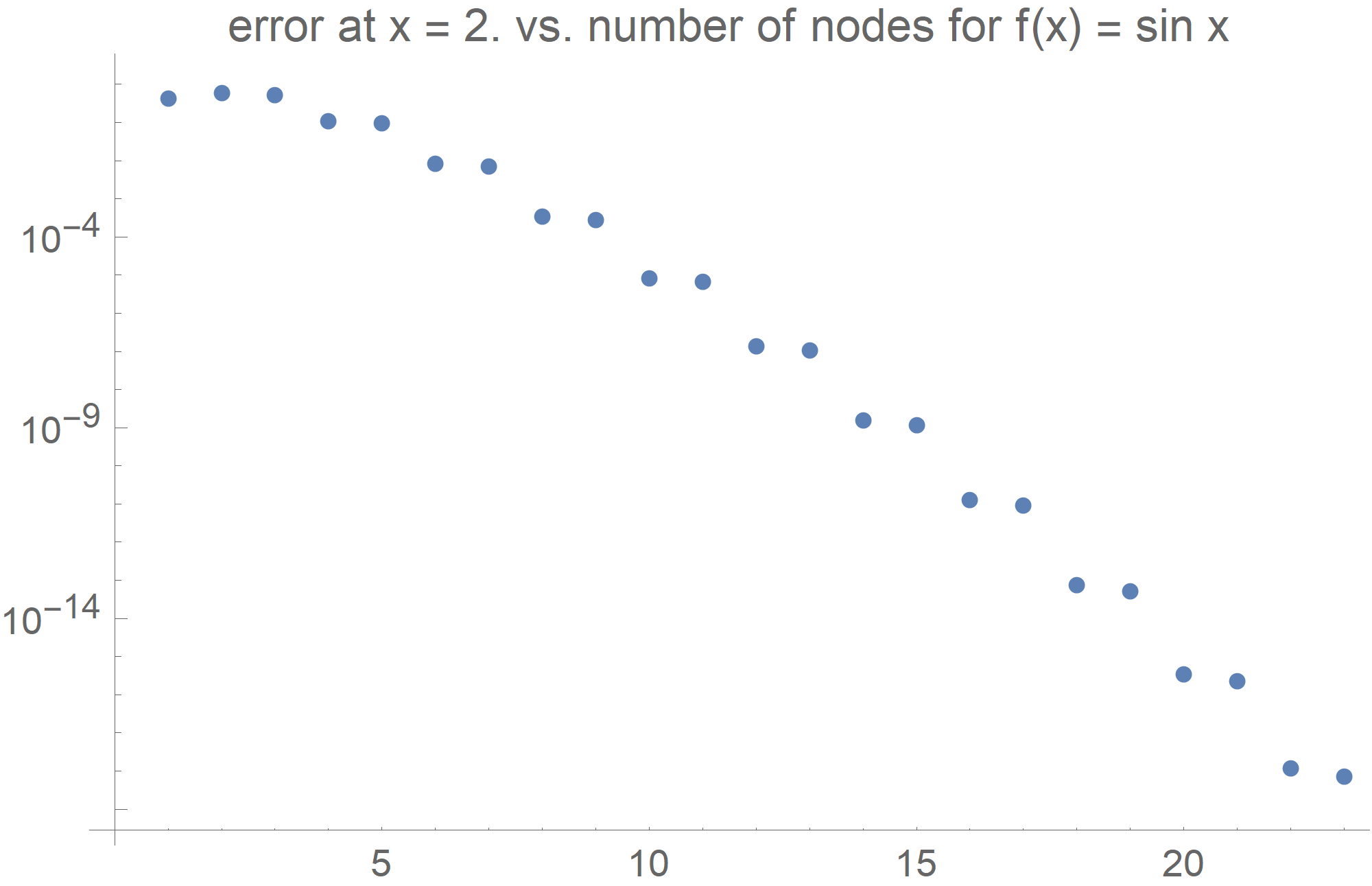}
	\caption{\label{fig:sin5}Absolute errors of $H_{w^2}[f;x]$ when computed by our algorithm 
		vs.\ the number $n$ of nodes for $f(t) = \sin t$ and $x = 2$.}
\end{figure}

\clearpage

\begin{example}
	 The second example is very similar to the first one, the only change being that we now use the
	 function $f(t) = \log(1+t^2)$. 
	 For this example, we can observe the same very good convergence properties as in
	 Example \ref{ex:1}, see Figures \ref{fig:log1}--\ref{fig:log5}.
\end{example}

\begin{figure}[h]
	\centering
	\includegraphics[width=0.6\textwidth]{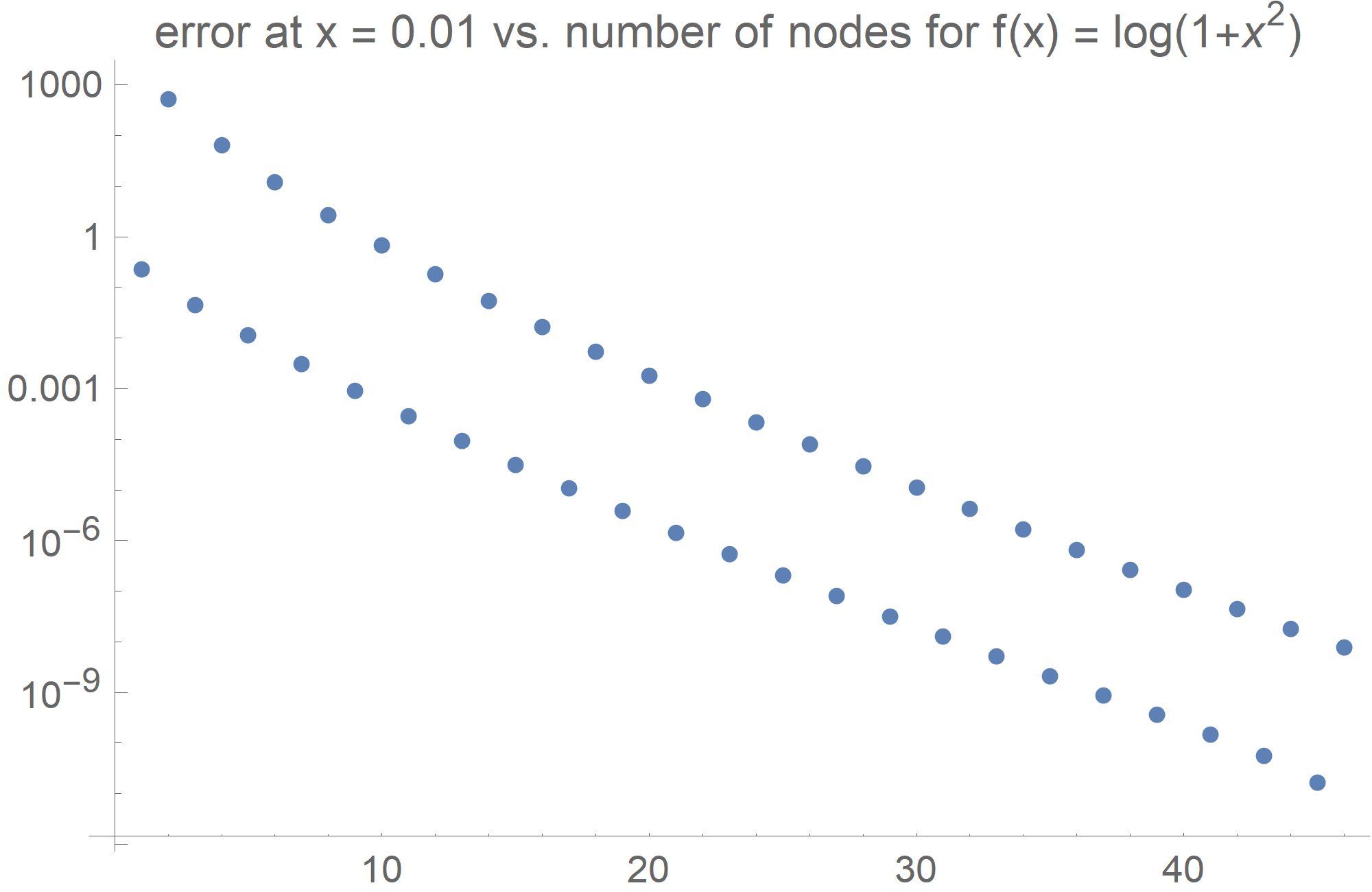}
	\caption{\label{fig:log1}Absolute errors of $H_{w^2}[f;x]$ when computed by our algorithm 
		vs.\ the number $n$ of nodes for $f(t) = \log(1 + t^2)$ and $x = 0.01$.}
\end{figure}

\begin{figure}[h]
	\centering
	\includegraphics[width=0.6\textwidth]{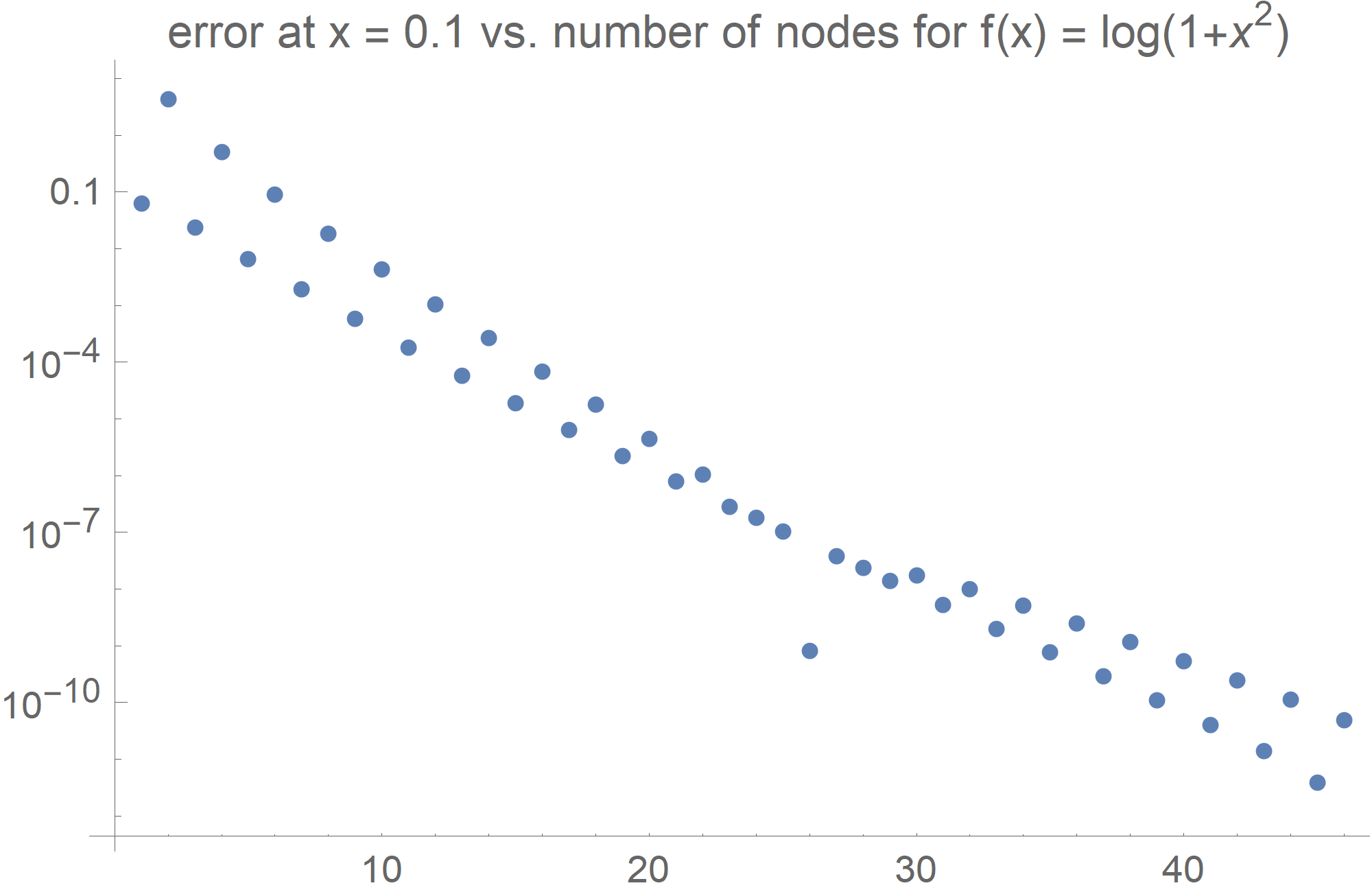}
	\caption{\label{fig:log2}Absolute errors of $H_{w^2}[f;x]$ when computed by our algorithm 
		vs.\ the number $n$ of nodes for $f(t) = \log(1 + t^2)$ and $x = 0.1$.}
\end{figure}

\begin{figure}[h]
	\centering
	\includegraphics[width=0.6\textwidth]{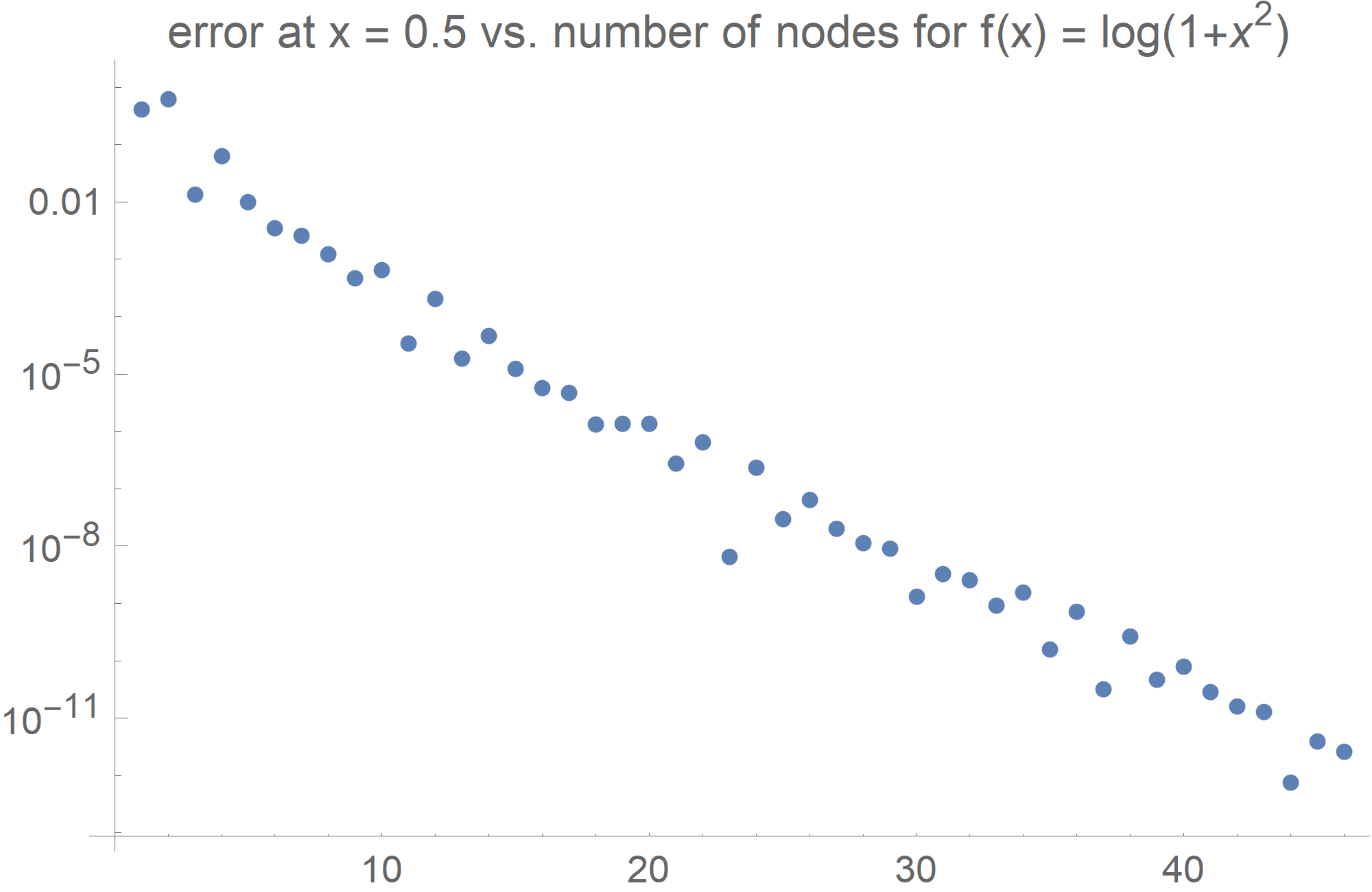}
	\caption{\label{fig:log3}Absolute errors of $H_{w^2}[f;x]$ when computed by our algorithm 
		vs.\ the number $n$ of nodes for $f(t) = \log(1 + t^2)$ and $x = 0.5$.}
\end{figure}

\begin{figure}[h]
	\centering
	\includegraphics[width=0.6\textwidth]{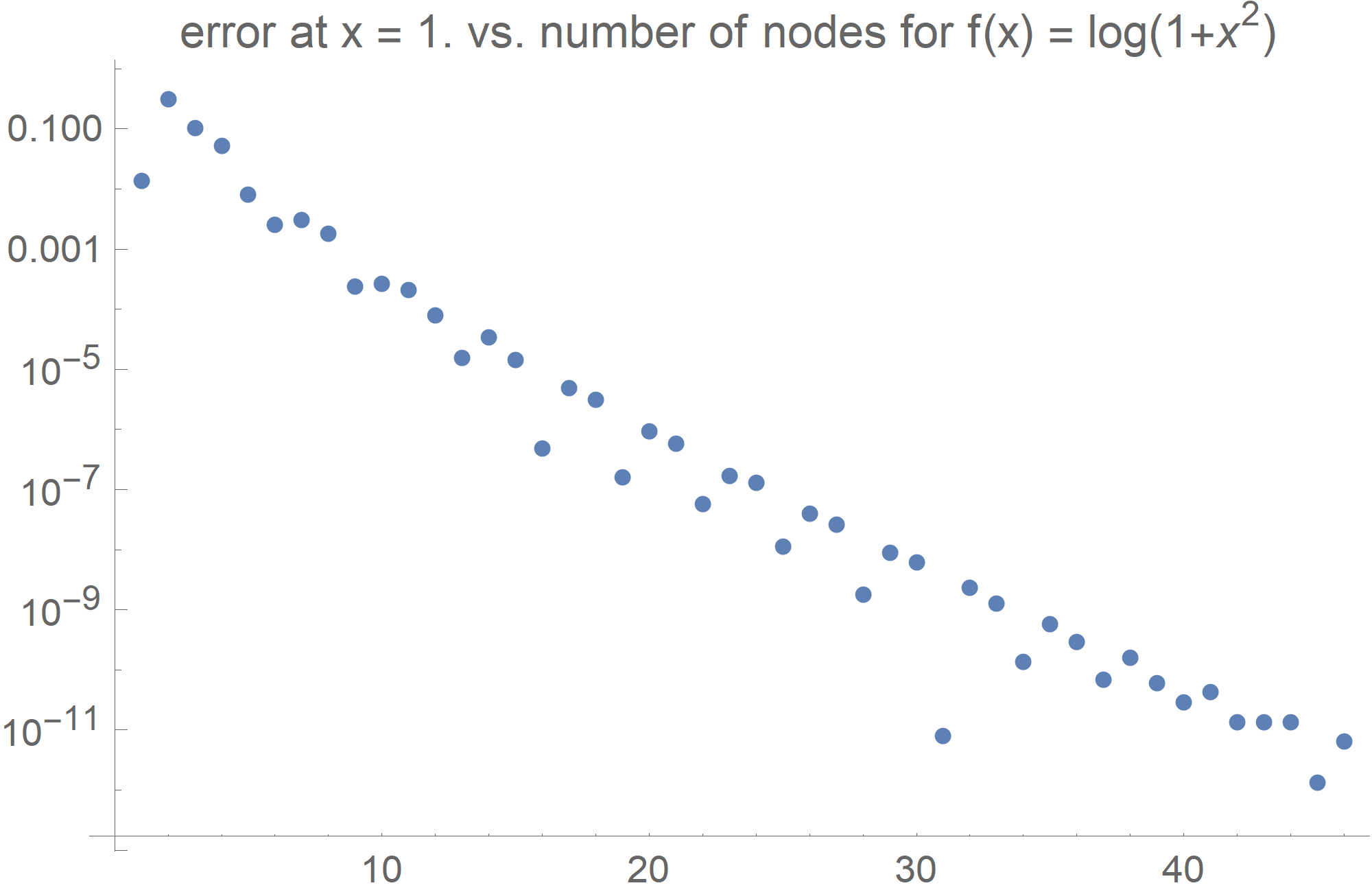}
	\caption{\label{fig:log4}Absolute errors of $H_{w^2}[f;x]$ when computed by our algorithm 
		vs.\ the number $n$ of nodes for $f(t) = \log(1 + t^2)$ and $x = 1$.}
\end{figure}

\begin{figure}[h]
	\centering
	\includegraphics[width=0.6\textwidth]{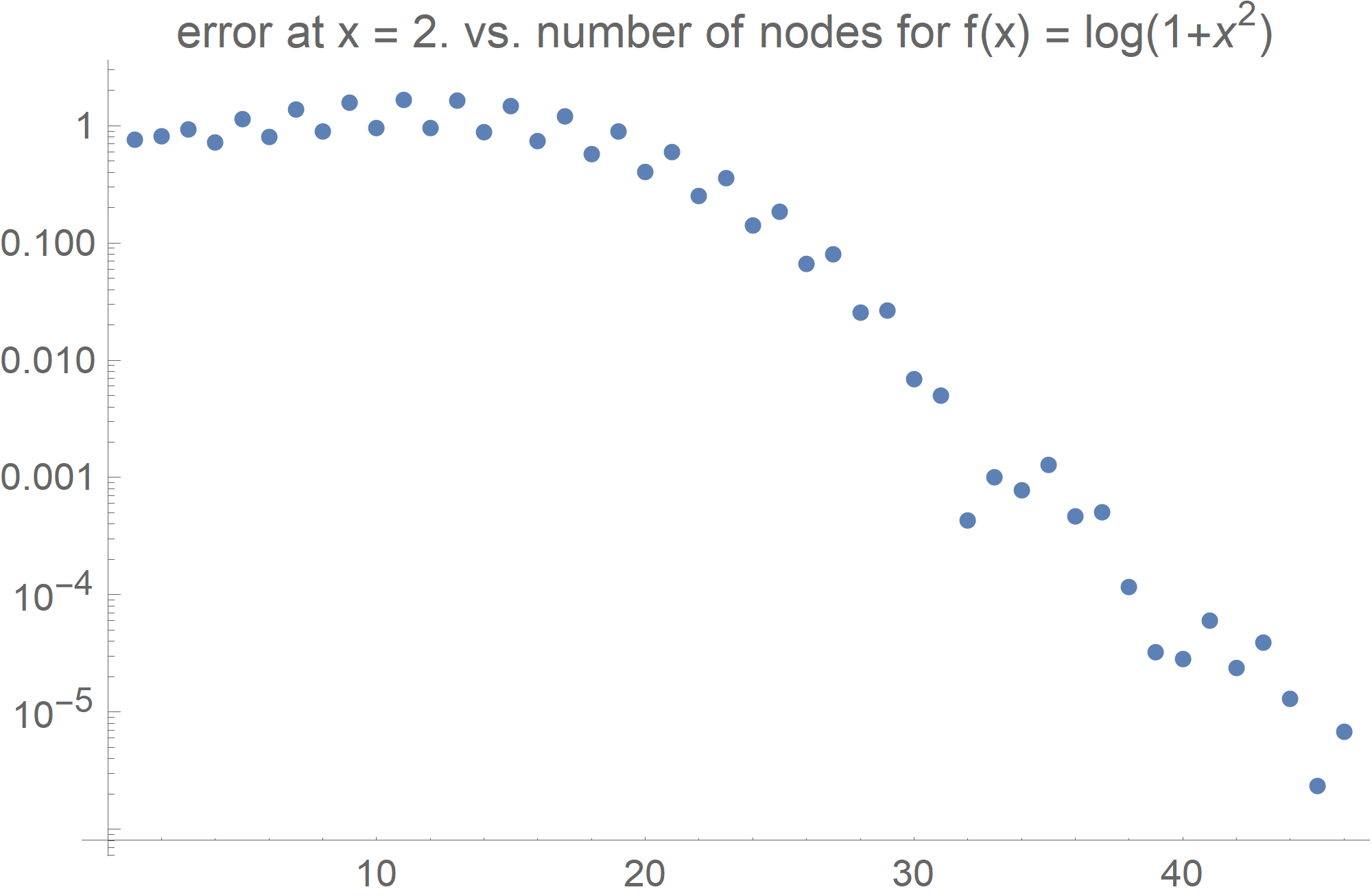}
	\caption{\label{fig:log5}Absolute errors of $H_{w^2}[f;x]$ when computed by our algorithm 
		vs.\ the number $n$ of nodes for $f(t) = \log(1 + t^2)$ and $x = 2$.}
\end{figure}

\clearpage

\appendix

\setcounter{equation}{0}
\section{Some needed potential theoretical background}

In order to understand the deep differences in the problems we consider in this paper to those studied in our papers \cite{DD1,DD2,DD3,DD4} and the complexities involved from moving from the results of \cite{DD1,DD2,DD3,DD4} to those in this paper, we believe it useful to add the following last section as an appendix. The  fundamental weighted energy problem on the real line:
\medskip

Let $\Sigma$ be a closed set on the real line and $w=\exp(-Q):\Sigma\to [0,\infty) $ be an upper semi-continuous weight function on $\Sigma$ with external field $Q$ that is positive on a set of positive linear Lebesgue measure. If $\Sigma$ is unbounded, we assume that 
\[
\lim_{|x|\to \infty,\, x\in \Sigma}(Q(x)-\log(x))=\infty.
\]
Fix $\Sigma$ and $Q$ and consider
\[
{\rm inf}_{\mu}\left(\int\int \log\frac{1}{|x-t|}d\mu(x)d\mu(t)+2\int Qd\mu\right)
\]
where the infimum is taken over all positive Borel measures $\mu$ with the support of $\mu$, ${\rm supp}(\mu)$ in $\Sigma$ and with $\mu(\Sigma)=1$. The infimum is attained by a unique minimizer $\mu_w$. Let
\[
V^{\mu_w}(x)=\int \log\left(\frac{1}{|x-t|}\right)d\mu_w(t),\, x\in \mathbb R
\]
be the logarithmic potential for $\mu_w$. Then the following variational inequalities hold:
\begin{equation}
\begin{array}{ccc}
V^{\mu_w}(x)+Q(x)\geq F_w & {\rm q.e.} & x\in\Sigma \\ 
V^{\mu_w}(x)+Q(x)=F_w & {\rm q.e.} & x\in {\rm supp}(\mu_w)
\end{array}
\end{equation}
Here, $F_w$ is a constant and q.e.\ (quasi everywhere) means, with the exception of a set of logarithmic capacity zero.

The support of $\mu_w$ is one of the most important and fundamental quantities to determine the minimizer $\mu_w$, an extremely important and challenging problem which appears   in diverse areas in mathematics and physics such as orthogonal polynomials, random matrix theory, combinatorics, approximation theory, electron configiurations on conductors, integrable systems, number theory and many more. When $Q$ is identically zero,  the support of $\mu_w$ typically "lives" close to the boundary of the set $\Sigma$ but when $Q$ is no longer identically zero, the support of $\mu_w$ depends heavily on $Q$ for example, its regularity and smoothness and can be quite arbitrary. The more complicated 
support of $\mu_w$ in the case of the work in this paper compared to the support of $\mu_w$ in our papers \cite{DD1,DD2,DD3,DD4}, is one important reason why the research in the current paper differs from our previous work in \cite{DD1,DD2,DD3,DD4} so substantially. 

\subsection{The case $\Sigma=\mathbb R$ and one interval: $Q$ even.} 

The research on orthogonal polynomials, their zeroes and associated Christoffel functions, as used as critical tools in our papers \cite{DD1,DD2,DD3,DD4}, was developed  by Lubinsky and Levin \cite{LL}. In particular, if 
$Q$ is even, $Q'$ exists in $(0,\infty)$ and $xQ'$ increasing on $(0,\infty)$ and positive there, the support of $\mu_w$ is given in Remark 2.3.
 
\subsection{The case $\Sigma=\mathbb R$ and one interval.}

Here, Lubinsky and Levin in their classic monograph \cite{LL}, established remarkable research on orthogonal polynomials, their zeroes and Christoffel functions to allow for the research in this paper. See Sections (2-5). In particular, under hypotheses on $Q$ such as given in Definition 2.1, the support of $\mu_w$ is given as in Definition 2.2.

\subsection{The case $\Sigma=\mathbb R$}

Deift and his collaborators in their papers \cite{Deift, Deift1, Deift2} studied the support of $\mu_w$ for smooth $Q$, for example polynomials and obtained many term asymptotics for the associated orthogonal polynomials and their zeroes. We did not use their research in this paper and leave that for future work. In this case, the support of $\mu_w$ typically need not be one interval; indeed it often splits into a finite number of intervals (sometimes with gaps) with endpoints described often using tools such as Riemann Hilbert problems.

\subsection{Some other cases.}

Damelin, Benko, Dragnev, Kuijlaars, Deift, Olver \cite{S1,S2,S3,O,O1} and many others have studied cases of $Q$ and $\Sigma$ where the support $\mu_w$ splits into a finite number of intervals (often with gaps) and with endpoints not necessarily known. Lubinsky and Levin have in recent years established remarkable results on asymptotics of orthogonal polynomials, their zeroes and Christoffel functions under very mild conditions on $Q$ and on various sets $\Sigma$. We do not use their research in this paper. See \cite{LL1,LL2}.
\medskip

In summary, descriptions of supports of minimizers for logarithmic energy variational problems such as (6.1) (and we do not discuss other kernels!) and associated research on their orthogonal polynomials, zeros and Christoffel functions for different $Q$
is a huge area of research in many areas of mathematics and physics. In particular, and in this regard, our results and methods in this paper, generalize, highly non-trivially our work in \cite{DD1,DD2,DD3,DD4}.

\setcounter{equation}{0}
\section{Comments on the Numerical Method}
\label{sec:num-comments}

In this appendix we collect some information that is helpful in the construction and implementation
of the numerical algorithm for the quadrature formula \eqref{w_jn} required for the numerical examples
presented in Section \ref{sec:num-examples}. To avoid excessive technical complications, the discussion
here will not cover the very general weight functions investigated in the main part of the paper. 
Rather, we will restrict our attention to
the special case discussed in Section \ref{sec:num-examples}, i.e.\ we shall assume throughout this appendix
that $w$ is the Freud-type weight given by
\begin{equation}
	\label{eq:wnum}
	w(t) = \exp(-Q(t)) 
	\qquad \mbox{ with } \qquad
	Q(t) = t^4
\end{equation}
for $t \in \mathbb R$.
Furthermore, as indicated in Section \ref{sec:num-examples}, the algorithmic aspects are not the main
point of this more theoretically oriented paper. Therefore, we emphasize here
that the description below is also rather theoretical and does not include issues like
the numerical stability of the approach. It is well known that this is a highly 
nontrivial matter that, however, needs to be discussed elsewhere.

The main observation in the present context is that, in view of eq.\ \eqref{eq:qf-hl}, 
the construction of the formula $Q_n[f;x]$ requires the following ingredients:
\begin{enumerate}
\item We need to be able to compute the Lagrange interpolation polynomial $L_n[f]$ for the given 
	function $f$. In view of the well known general relation
	\begin{equation}
		\label{eq:lagrange}
		L_n[f] (t)
		= \sum_{j=1}^n f(x_{j,n}) 
			\prod_{\stackrel{\scriptstyle k=1}{k\ne j}}^n
				\frac{t - x_{k,n}}{x_{j,n} - x_{k,n}},
	\end{equation}
	this means that we need to know the location of the nodes $x_{j,n}$ ($j = 1, 2, \ldots, n$),
	i.e.\ the zeros of the orthonormal polynomials $p_n = p_n(w^2, \cdot)$ with respect to the
	weight function $w^2$.
\item In the second step, it is necessary to apply the weighted Hilbert transform operator $H_{w^2}$
	to this interpolation polynomial. In view of the linearity of the Hilbert transform, this demands the 
	knowledge of the values of $H_{w^2}[\pi_k; x]$ for $k = 0, 1, 2, \ldots, n$ where
	$\pi_k(t) = t^k$ is the $k$th monomial.
\end{enumerate}
We shall now describe how this information can be obtained. 

\subsection{The moments of the weight function $w^2$}

It turns out that, for both required items, it is necessary to compute the moments 
\begin{equation}
	\label{eq:def-moments}
	\mu_k := \int_{-\infty}^\infty w^2(t) t^k dt
\end{equation}
of the given weight function $w^2$ for $k = 0, 1, 2, \ldots, 2n$, so this is our first result. Indeed, 
we can see that
\begin{equation}
	\label{eq:moments}
	\mu_k = \begin{cases}
		0 & \mbox{ if } k \mbox{ is odd,} \\
		2^{-(k+5)/4} \cdot \Gamma\left(\frac{k + 1}4 \right) & \mbox{ if } k \mbox{ is even,}
		\end {cases}
\end{equation}
where $\Gamma$ denotes Euler's Gamma function.
The result for odd values of $k$ immediately follows from a symmetry argument because the weight
function $w^2$ is even, 
and the result for even values of $k$ can be obtained by a symbolic integration using 
a computer algebra package like Mathematica \cite{Math}.

\subsection{The nodes of the interpolation operator $L_n$}

As indicated above, the nodes of the interpolation operator $L_n$ are the zeros of the
orthonormal polynomial $p_n(w^2, \cdot)$. To determine these values, we follow a strategy outlined 
in \cite{Gau}. Specifically, we consider the orthogonal polynomials
$\tilde p_n(w^2, \cdot)$ for the weight function $w^2$ that, instead of being normalized according to 
\eqref{eq:onp}, are normalized such that their leading coefficient is 1. Clearly, this means that, 
for each $n$, there exists some real number $C_n$ such $p_n(w^2, x) = C_n \tilde p_n(w^2, x)$ holds for
all $x \in \mathbb R$, and hence the zeros of $p_n(w^2, \cdot)$ coincide with those of $\tilde p_n(w^2, \cdot)$.
It is then a well known general property of orthogonal polynomials that there exist real numbers $\alpha_k, \beta_k$ 
($k = 0, 1, 2, \ldots$) depending on the weight function $w^2$
such that the polynomials $\tilde p_n(w^2, \cdot)$ satisfy the three-term recurrence relation
\begin{subequations}
\begin{equation}
	\label{eq:ttrr}
	\tilde p_{k+1}(w^2, t) = (t - \alpha_k) \tilde p_k(w^2, t) - \beta_k \tilde p_{k-1}(w^2, t)
	\qquad k = 0, 1, 2, \ldots
\end{equation}
with starting values
\begin{equation}
	\tilde p_0(w^2, t) = 1 
	\qquad \mbox{ and } \qquad
	\tilde p_{-1}(w^2, t) = 0,
\end{equation}
\end{subequations}
cf., e.g., \cite[eq.\ (1.3)]{Gau}. From \cite[Section 6.1]{Gau} we can then conclude that the desired zeros of the
polynomial $\tilde p_n(w^2, \cdot)$ (and hence also the zeros of $p_n(w^2, \cdot)$, i.e.\ the required interpolation nodes)
are the eigenvalues of the tridiagonal matrix 
\[
	M_n = \begin{pmatrix}
		\alpha_0 & \sqrt{\beta_1} & &  & & \\
		\sqrt{\beta_1} & \alpha_1  & \sqrt{\beta_2} & &  &  \\
		 & \ddots & \ddots & \ddots &  \\
		& &  \sqrt{\beta_{n-2}} & \alpha_{n-2}  & \sqrt{\beta_{n-1}} \\
		& & & \sqrt{\beta_{n-1}} & \alpha_{n-1} 
		\end{pmatrix}.
\]
So, to compute the interpolation nodes, we have to find the entries of the matrix $M_n$ and then calculate
its eigenvalues. 

For the former step, we must determine the values $\alpha_k$ and $\beta_k$. To this end, 
we first note that, in our case,
\[
	\alpha_k = 0 \qquad \mbox{ for all } k;
\]
this follows because the weight function $w^2$ that we have chosen is even.
For the $\beta_k$, we use the bootstrap method (also known
as the Stieltjes procedure) described in \cite[Section 4.1]{Gau} that can be formulated in the following way:
\begin{itemize}
\item Set $\beta_0 = \int_{-\infty}^\infty w^2(t) dt = \mu_0$ (which is known from eq.\ \eqref{eq:moments}).
\item For $k = 0, 1, 2, \ldots, n-1$:
	\begin{itemize}
	\item Compute $\tilde p_{k+1}(w^2,\cdot)$ by means of eq.\ \eqref{eq:ttrr}.
	\item Compute 
		\begin{equation}
			\label{eq:beta-stieltjes}
			\beta_{k+1} 
			= \frac{\int_{-\infty}^\infty w^2(t) \left( \tilde p_{k+1}(w^2, t) \right)^2 d t}
				{\int_{-\infty}^\infty w^2(t) \left( \tilde p_{k}(w^2, t) \right)^2 d t}.
		\end{equation}
	\end {itemize} 
\end{itemize}
Note that the computation of the integrals in eq.\ \eqref{eq:beta-stieltjes} is technically possible because
the coefficients of the polynomials $p_\ell(w^2, \cdot)$ ($\ell = k, k+1$) in the integrands have already 
been computed, so the squares of these polynomials can be computed as well, and therefore these integrals
can be expressed as linear combinations of the known moments $\mu_\ell$ with known coefficients.

It then remains to compute the eigenvalues of $M_n$.
In view of the tridiagonal structure of $M_n$, this is a straightforward process in numerical
linear algebra; the QR method, for example, is a reliable, stable and efficient method to accomplish this goal.

\subsection{Evaluation of $H_{w^2}[L_n; x]$} 

We now have all the components of the right-hand side of eq.\ \eqref{eq:lagrange} available, and so we can
compute the interpolation polynomial $L_n[f]$ and express it in the canonical form
\[
	L_n[f](t) = \sum_{k=0}^{n-1} \lambda_{kn}[f] t^k
\]
with certain coefficients $\lambda_{kn}[f]$ that depend on the given function $f$. It then follows that
\[
	H_{w^2}[L_n; x] 
	= \cpv_{-\infty}^\infty w^2(t) \frac{L_n[f](t)}{t-x} dt 
	= J_1[f](x) + L_n[f](x)  J_2(x)
\]
where
\[
	J_1[f](x) = \int_{-\infty}^\infty w^2(t) \frac{L_n[f](t) - L_n[f](x)}{t-x} dt
\]
and
\[
	J_2(x) = \cpv_{-\infty}^\infty w^2(t) \frac{1}{t-x} dt.
\]

For $J_1[f](x)$, we see for any $x \in \mathbb R$ that
\begin{eqnarray*}
	J_1[f](x) 
	&=& \int_{-\infty}^\infty w^2(t) 
		\frac{\sum_{k=0}^{n-1} \lambda_{kn}[f] t^k -  \sum_{k=0}^{n-1} \lambda_{kn}[f] x^k}{t-x} dt \\
	&=&  \int_{-\infty}^\infty w^2(t) \sum_{k=1}^{n-1} \lambda_{kn}[f] \frac{t^k - x^k}{t-x} dt \\
	&=&  \int_{-\infty}^\infty w^2(t) \sum_{k=1}^{n-1} \lambda_{kn}[f] 
							\sum_{\ell=0}^{k-1} t^\ell x^{k-\ell-1} dt \\
	&=& \sum_{k=1}^{n-1} \lambda_{kn}[f] \sum_{\ell=0}^{k-1} x^{k-\ell-1} \mu_\ell
\end{eqnarray*}
which can be evaluated with the help of eq.\ \eqref{eq:moments}.

For the integral $J_2(x)$, we first note that, owing to the fact that $w^2$ is an even function, $J_2$ is an odd function. 
Therefore, $J_2(0) = 0$ and $J_2(x) = - J_2(-x)$ for all $x < 0$. Hence it suffices to explicitly consider the computation
of $J_2(x)$ for $x > 0$; the remaining cases can be covered by symmetry arguments. In this case we can use the 
fact that our specific application uses $w^2(t) = \exp(-2t^4)$, cf.\ eq.\ \eqref{eq:wnum}, and argue as follows:
\[
	J_2(x) 
	= \int_{-\infty}^0 \frac{\exp(-2t^4)}{t-x} dt + \cpv_0^{\infty} \frac{\exp(-2t^4)}{t-x} dt 
	= J_{21}(x) + J_{22}(x),
\]
say, where (using the substitution $t = -u^{1/4}$ in the first step and the symbolic integration
capabilities of Mathematica \cite{Math} in the last one)
\begin{eqnarray*}
	J_{21}(x) 
	&=&  \int_{-\infty}^0 \frac{\exp(-2t^4)}{t-x} dt \\
	&=& \frac 1 4 \int_0^\infty \frac{\exp(-2u)}{-u - x u^{3/4}} du  \\
	&=& \frac 1 4 \int_0^\infty \exp(-2u) 
						\left[ \frac 1 {x^3 u^{1/4}} - \frac 1 {x^2 u^{1/2}} 
							+ \frac 1 {x u^{3/4}} - \frac 1 {x^3 (x + u^{1/4})} \right] du \\
	&=&   \frac{\Gamma(3/4)}{2^{11/4} x^3} 
		+ \frac{\Gamma(1/4)}{2^{9/4} x}  \\
	& & {} + \exp(-2x^4)
			\Bigg[
				\frac {\pi \mathrm i} 4 \mathop{\mathrm{erf}}(\mathrm i \sqrt{2} x^2)
				- \frac 1 4 \mathop{\mathrm{Ei}}(2 x^4) 
				- \frac {\pi \mathrm i}{2} \\
	& & \phantom{\exp(-2x^4) \Bigg] abc} {}
				+ \frac{3 \sqrt 2}{128} (-1 + \mathrm i) \Gamma \left( - \frac 1 4 \right) 
						\Gamma \left( - \frac 3 4, -2 x^4 \right)  \\
	& & \phantom{\exp(-2x^4) \Bigg] abc} {}
				- \frac{\sqrt 2}{8} (1 + \mathrm i)  \Gamma \left(\frac 5 4 \right) 
						\Gamma \left( - \frac 1 4, -2 x^4 \right) 
			\Bigg].
\end{eqnarray*}
In this formula, $\mathrm i$ is the imaginary unit, $\mathop{\mathrm{erf}}$ denotes the error function,
$\mathop{\mathrm{Ei}}$ is the exponential integral, and $\Gamma(\cdot, \cdot)$
is the incomplete Gamma function.
Note that, as one may expect since $J_{21}(x)$ is the integral of a real valued function over a real interval,
all the imaginary parts of the components of the final expression for $J_{21}(x)$ cancel each other, and so 
the result is purely real.

Finally, using the substitution $t = u^{1/4}$ in the first step and once again the symbolic integration
capabilities of Mathematica in the last one, we find
\begin{eqnarray*}
	J_{22}(x) \!\!\!\!\!
	&=&  \!\!\!\!\! \cpv_0^{\infty} \frac{\exp(-2t^4)}{t-x} dt \\
	&=&  \!\!\!\!\! \frac 1 4 \cpv_0^\infty \frac{\exp(-2u)}{u - x u^{3/4}} du  \\
	&=&  \!\!\!\!\! \frac 1 4 \cpv_0^\infty \exp(-2u) 
						\left[- \frac 1 {x^3 u^{1/4}} - \frac 1 {x^2 u^{1/2}} 
							- \frac 1 {x u^{3/4}} + \frac 1 {x^3 (u^{1/4} - x)} \right] du \\
	&=&  \!\!\!\!\!
		  - \frac{\sqrt 2 \pi} {8 x^2} - \frac{\Gamma(\nicefrac 3 4)}{2^{11/4} x^3} 
		- \frac{\Gamma(\nicefrac 1 4)}{2^{9/4} x}
		- \frac{\pi}{2x^4} G_{8,9}^{5,4} \!
			\left( \! \begin{matrix} \nicefrac 1 4, \nicefrac 1 2, \nicefrac 3 4, 1, \nicefrac 1 8, \nicefrac 3 8, \nicefrac 5 8, \nicefrac 7 8 \\
						\nicefrac 1 4, \nicefrac 1 2, \nicefrac 3 4, 1, 1, \nicefrac 1 8, \nicefrac 3 8, \nicefrac 5 8, \nicefrac 7 8
						\end{matrix}
			\, \Bigg| \, 2 x^4 \!\! \right)
\end{eqnarray*}
where $G_{8,9}^{5,4}$ is a member of the class of Meijer's $G$-functions (see, e.g., \cite{BS}).

Combining all the results listed in Appendix \ref{sec:num-comments}, we can implement the algorithm
used for computing the numerical results of Section \ref{sec:num-examples}.

\setcounter{equation}{0}
\section*{Acknowledgement}

The authors acknowledge the enormous contributions of Hee Sun Jung to the work in this paper. The authors thank Doron Lubinsky for his support for this work, Sheehan Olver for his helpful advice regarding some of the special functions occurring in the paper, 
and two anonymous referees for constructive and useful comments which helped improve the paper.

\setcounter{equation}{0}
\section*{Declaration of interest statement}
 
The authors report there are no competing interests to declare.

\end{document}